\documentclass[12pt]{article}

\usepackage{hyperref}
\usepackage{optidef}
\usepackage{amsmath,amssymb,amsfonts,amsthm,enumerate}
\usepackage{eucal} 
\usepackage{xcolor}
\usepackage{tikz-cd}
\usepackage{youngtab}
\usepackage{young}
\usepackage{lscape}
\usepackage{tikz}
\usepackage{environ}
\usepackage{caption}
\usepackage{subcaption}
\usepackage{algorithm}
\usetikzlibrary{positioning, backgrounds}
\usepackage[noend]{algpseudocode}
\usepackage{graphicx} 
\usepackage{adjustbox}
\usepackage{amsmath,amsthm,amssymb,amscd}

\DeclareMathOperator{\chull}{conv}

\numberwithin{equation}{section}

\newtheorem{thm}{Theorem}
\newtheorem{lemma}{Lemma}

\newtheorem{cor}[lemma]{Corollary}

\theoremstyle{definition}
\newtheorem{defn}{Definition}

\theoremstyle{definition}

\DeclareMathOperator{\KL}{KL}

\newcommand{\pspace}{\mathcal{P}}
\newcommand{\qspace}{\mathcal{Q}}

\newcommand{\kl}[2]{\KL(#1 \lVert #2)}
\DeclareMathOperator{\Id}{Id}
\newcommand{\PP}{\mathcal{P}}
\newcommand{\QQ}{\mathcal{Q}}
\newcommand{\RR}{\mathcal{R}}
\newcommand{\psimp}{\Delta_n^+}

\title{Transport based embeddings with topological guarantees}

\author{Erik Carlsson and John Carlsson}
\begin{document}
\maketitle
\begin{abstract}
Point clouds arising in  image collections, samples
from Markov chain Monte Carlo, or states of a random walk, often have
a simple underlying geometry which is obscured by noise, high
ambient dimension, and the failure of Euclidean distance to reflect
similarity. Methods such as UMAP and t-SNE condense such data into
usable form, but rely on heuristic choices and provide no guarantee
that the output reflects the topology of the input. We introduce a
condensation method that comes with such a guarantee. Encoding the
data as a positive $m\times n$ stochastic matrix $Q=(q_{ij})$, for
instance the transition matrix of a random walk on the point cloud,
we define a potential function
$\psi(p)=\log \sum_{i} \exp(-\kl{p}{q_{i\bullet}})$
on the probability simplex $\Delta_n$, where $\kl{p}{q}$ is the
Kullback-Leibler divergence, and prove that $\psi$ is $c$-convex in
the sense of Optimal Transport Theory for the cost function
$c(p,q)=\kl{p}{q}$. The associated transport map collapses noisy
directions while provably preserving topology: the super-level sets
of $\psi$ are homotopy equivalent to those of the $c$-conjugate
function, whose image is a condensed, resampleable family of topological spaces which can be interpreted as a continuous analog of an alpha shape. 
We demonstrate the method by recovering
the circle of camera angles from the COIL image dataset, where a
standard PCA pipeline produces spurious homology, and the quotient
$SO(3)/A_4$ from $45{,}000$ views of a tetrahedron in the SYMSOL
pose-estimation benchmark.

\end{abstract}

\section{Introduction}

A recurring task in data analysis is to recover the intrinsic
geometry of a point cloud: a low-dimensional shape, possibly with
nontrivial topology, obscured by noise and a high-dimensional
embedding. Widely used methods such as UMAP \cite{McInnes2018UMAPUM}
and t-SNE \cite{hinton2008tsne} address this by condensing the data
into low-dimensional coordinates, but they rely on heuristic
choices and provide no guarantee that the output reflects the
topology of the input, while persistent homology computes
topological invariants with stability guarantees but does not by
itself produce a denoised representation of the data. The purpose
of this paper is a construction that does both: it converts a point
cloud into a filtered family of condensed ``shapes'' from which
arbitrarily many samples can be drawn, together with a theorem
guaranteeing that each shape is homotopy equivalent to a
super-level set of a density-like potential on the data. Because
the input is a stochastic matrix rather than a Euclidean point
cloud, the construction applies to data described by any similarity
kernel or random walk, such as image sets with symmetries,
where Euclidean distance fails to reflect the underlying geometry.

In previous work \cite{carlsson2025kernel}, the authors used Legendre duality to transform a positively weighted 
sum of (isotropic) Gaussian kernels 
\begin{equation}
\label{eq:gaussiansum}    
\phi(x)=\log f(x),\  f(x)=\sum_{i=1}^n a_i\exp(-t\lVert x-x_i\rVert^2)
\end{equation}
into a filtered family 
$\mathcal{Y}(a)\subset \chull(\{x_i\})\subset \mathbb{R}^d$
of condensed subspaces of the interior of the convex hull,
which were interpreted as a continuous version of an alpha shape \cite{edelsbrunner1983shape}. It was proved that each $\mathcal{Y}(a)$ is homotopy equivalent to the super-level set 
$\mathcal{X}(a)=\phi^{-1}[a,\infty)$, but that their covering number is stable with respect to the embedding dimension, making it plausible to triangulate for $d\gg 0$. By contrast, 
the $\epsilon$-covering number of 
$\mathcal{X}(a)$ generally scales exponentially with $d$ even when the point cloud consists of a single point.


Extending this construction
to other geometries or kernel functions requires replacing the
Legendre transform by a more general type of duality arising in Optimal Transport Theory known as $c$-convexity 
\cite{villani2008old}, 
in which the negative exponent 
$t\lVert x-y\rVert^2$ of the kernel is replaced by a more general 
\emph{cost function} $c(x,y)$.
In this setup, while the potential corresponding to
$\phi$ is not derived as the Kantorovich potential 
of any particular optimal transport problem, the machinery
still applies and produces a conjugate function whose super-level sets define the shape $\mathcal{Y}(a)$, as well as
a \emph{transport map}
$T:\mathcal{X}(a)\rightarrow \mathcal{Y}(a)$. Exactly 
as in the Gaussian case, the transport map
simultaneously plays the role of the homotopy equivalence between the two spaces, and can also be used to collapse away topologically trivial dimensions in $\mathcal{X}(a)$, which would be interpreted as noise in certain data-related applications.
In a recent example, 
G. DePaul and the first author formulated one
such extension using cosine similarity-based kernel and cost functions on the sphere, 
with an application to Neuro Science
\cite{carlsson2025cosim}.


Extending the transport method to point clouds which are not well-captured by Euclidean distances can be handled by
considering more general kernel functions $K_t:\mathbb{R}^d\times \mathbb{R}^d\rightarrow \mathbb{R}_{>0}$
in \eqref{eq:psigauss}.
Such a construction could then be applied to 
the super-level set homotopy type of
essentially arbitrary densities $\rho$ on $\mathbb{R}^d$,
for instance by using the Metropolis algorithm to express it as a convolution. 
Indeed, if $K_t(x,y)$ denotes the transition kernel at time $t$ of a 
Markovian process with stationary density $\rho$, then
\begin{equation}
\label{eq:movingkernel}
\rho(y)=\int_{\mathbb R^d}\rho(x)K_t(x,y)dx.
\end{equation}
Then given samples $(x_1,\ldots,x_N)$ from the underlying distribution 
of $\rho$, we find that
$\rho(x)\approx \frac1N\sum_{i=1}^N K_t(x_i,x)$,
which is precisely a kernel density estimator with moving kernels. It is not clear how to make such an 
extension, since the arguments of \cite{carlsson2025kernel,carlsson2025cosim} were highly specific to the kernel function.

We propose another way to extend the pipeline from Gaussian kernels to 
moving ones: let us identify
$\theta :\mathbb{R}^d\rightarrow\mathcal{X}$,
where $\mathcal{X}$ consists of probability distributions on 
$\mathbb{R}^d$ given by Gaussian kernels with fixed scale $t$, by letting $\theta(x)$ denote that Gaussian centered at $x\in \mathbb{R}^d$. It is well-known that the
\emph{Kullback-Leibler divergence} between two such Gaussians recovers the scaled squared-distance
$t\lVert x-y\rVert^2$ between their respective centers. 
Then the function (on a suitable domain) given by
\begin{equation}
\label{eq:psigauss}    
\psi(\mu)=\sum_{i=1}^N c_i \exp(-\kl{\mu}{\mu_i})
\end{equation}
agrees with $\phi(x)$ for $\mu=\theta(x)$, and 
$\mu_i=\theta(x_i)$.

We will consider a version of \eqref{eq:psigauss} in the case of
finite nonvanishing distributions,
which will be given as elements of the positive
probability simplex
\begin{equation}
    \Delta^+_n=\left\{(p_1,...,p_n): p_1+\cdots+p_n=1,\ \mbox{$p_i> 0$ for all $i$}\right\}.
\end{equation}
Then for any pair of densities the Kullback-Leibler divergence
\begin{equation}
    \kl{p}{q}=\sum_{i=1}^n p_i\log(p_i/q_i),
\end{equation}
is always finite-valued, and is a non-symmetric distance in the
sense that $\kl{p}{q}\geq 0$, with equality if and only if $p=q$.
Our proposed potential function $\psi:\psimp\rightarrow \mathbb{R}$
can now be defined by
\begin{equation}
\label{eq:defpsi}    
\psi(p)=\log \sum_{i=1}^m  c_i 
\exp(-\kl{p}{q_{i\bullet}})
\end{equation}
for some coefficients $c_i>0$.
This function has high values on distributions
$p\in \Delta_n$ which are similar to many rows of 
$Q$, analogous to a statistical density estimator.

From the point of view of 
Topological Data Analysis, one might
consider studying the super-level set persistent homology of
$\psi$ directly to model the topological type of $Q$, regarded as a data set.
Computationally, doing this without applying the transport map would be even less tractable than doing so for Gaussian kernels, 
due to the high dimensionality of $\Delta^+_n$. However, 
the authors of \cite{carlsson2022search} recovered topological data of square stochastic matrices coming from Markov chains 
by restricting a related potential function to the low-dimensional skeleta of $\Delta_n$. 
Both this construction and $\psi$ have the property that they
lift discrete data from a Markov chain to a continuous space.

In Theorem \ref{thm:main} below, we prove a version of the main
result of \cite{carlsson2025kernel} for these functions,
which says that $\psi$ is $c$-convex with 
respect to the discrete KL-divergence $c(p,q)=\kl{p}{q}$, and we give
an explicit formula for the resulting 
transport map $T:\Delta^+_n\rightarrow \Delta_n^+$. We prove that $T$ determines a homotopy
equivalence between the super-level sets of $\psi$ and those of the conjugate function 
$\eta$, defined on the range of $T$. 
In Corollary \ref{cor:gaussians}, we describe how the topology of the 
Gaussian potential function 
is derived as a special case.

In order to apply the construction to datasets, we
describe a method for converting a point cloud
to the desired input form of a
positive stochastic matrix $Q=(q_{ij})$,
which is one for which $q_{i\bullet}\in \psimp$.
To do this, we begin by generating the transition matrix $P$ of a Markov chain whose states are identified with the points of the dataset, using the Stochastic Neighbors construction, which is the first step of the $t$-SNE algorithm \cite{hinton2008tsne}.
We then define $Q=P(t)$ to be the corresponding continuous time transition matrix at a fixed time $t$, using the matrix exponential. This effectively extends the local kernel functions to a discretized one at a larger scale, while remaining confined to the geometry of the point cloud instead of cutting across it as Gaussian kernels would.
Generally, the approach of encoding of point cloud using a random walk
is similar in spirit to diffusion maps \cite{coifman2006diffusion}.

In Sections \ref{sec:transport} and \ref{sec:proofs}, we present and prove Theorem \ref{thm:main} as well as Corollary \ref{cor:gaussians}.
In Section \ref{sec:implementations}, we explain a 
method for generating
the input data of a stochastic matrix and coefficient vector 
for our transport construction, using
continuous time Markov chains derived from point
clouds using stochastic neighbors \cite{hinton2008tsne}.
In Section \ref{sec:experiments}, we demonstrate the pipeline on
four examples: a noisy synthetic point cloud whose core circle is
recovered; uniform samples from a square, where the method avoids
the pinching artifacts of a default UMAP embedding; the COIL
rotating-object images \cite{nene1996coil100}, where resampling the
shape repairs the spurious homology classes produced by a standard
PCA projection; and the SYMSOL pose-estimation benchmark
\cite{wiersma2021symsol}, where we recover the topology of
$SO(3)/A_4$, the space of orientations of a tetrahedron modulo its
rotational symmetries, from $45{,}000$ images.

\subsection{Acknowledgments}

The authors gratefully acknowledge the support of ONR Grant N000142112208, ``Topological data analysis in optimization'', as well as the support of the Capital One Center for Responsible AI Decision Making in Finance.

\section{Transport construction}

\label{sec:transport}

We define the constructions discussed in the introduction, and formulate the statement of our main result.
We first begin with a definition from
Optimal Transportation Theory \cite{villani2008old}.
\begin{defn}
\label{def:cconv}
Let $c:\mathcal{X}\times \mathcal{Y}\rightarrow \mathbb{R}$
be a general \emph{cost function}
on two spaces $\mathcal{X},\mathcal{Y}$. We say that
a function $\psi:\mathcal{X}\rightarrow \mathbb{R}$
is $c$-\emph{convex} if there exists a function $\zeta:\mathcal{Y}\rightarrow \mathbb{R} \cup \{-\infty\}$ satisfying
\begin{equation}
\label{eq:defcconv}    
\psi(x)=\sup_{y\in \mathcal{Y}} (\zeta(y)-c(x,y))
\end{equation}
In this case, we have a particular choice of $\zeta$
called the \emph{conjugate function}
\begin{equation}
\label{eq:cconj}
\psi^c(y)=\inf_{x\in \mathcal{X}} (\psi(x)+c(x,y))
\end{equation}
which would satisfy $\psi^c(y)=\sup \zeta(y)$,
where the supremum is over all functions $\zeta$ satisfying
\eqref{eq:defcconv}. When $\psi$ is $c$-convex, a function $T:\mathcal{X}\rightarrow \mathcal{Y}$ is called a \emph{transport map}
if we have
\begin{equation}
\label{eq:transportcondition}    
\psi^c(T(x))-\psi(x)=c(x,T(x))
\end{equation}
for all $x\in \mathcal{X}$.
\end{defn}

\begin{figure}
    \centering
    \begin{subfigure}[b]{.54\linewidth}
        \includegraphics[scale=.15]{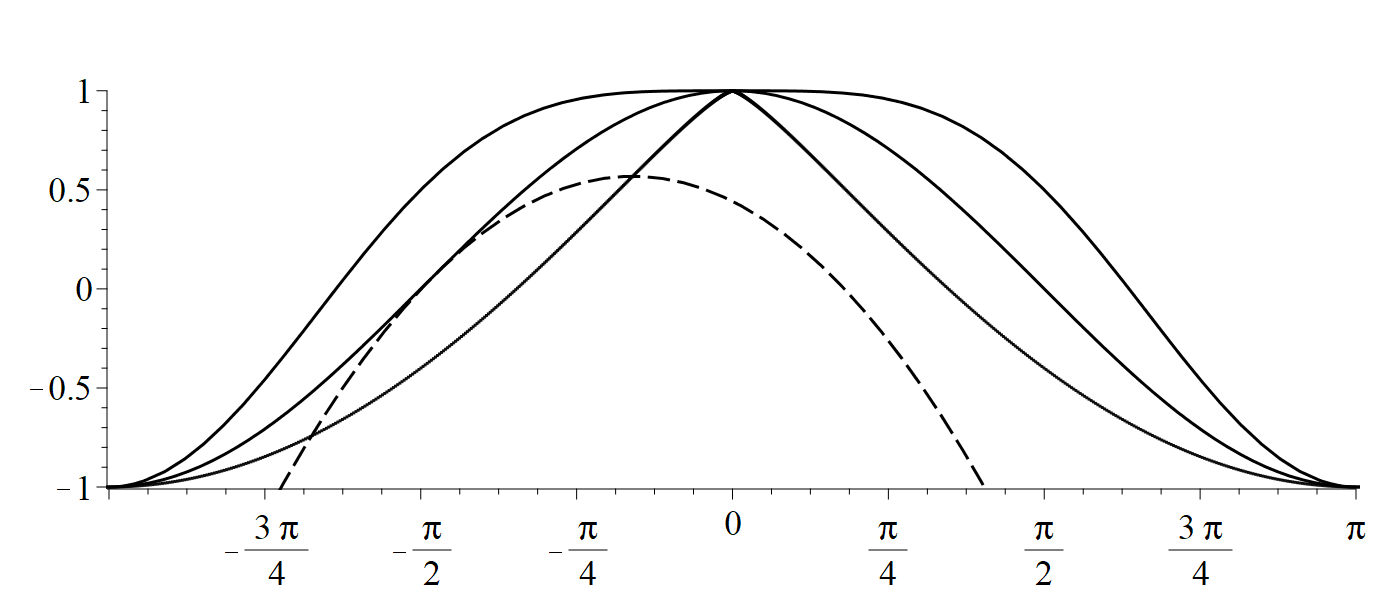}
    \end{subfigure}    
        \begin{subfigure}[b]{.45\linewidth}
        \includegraphics[scale=.14]{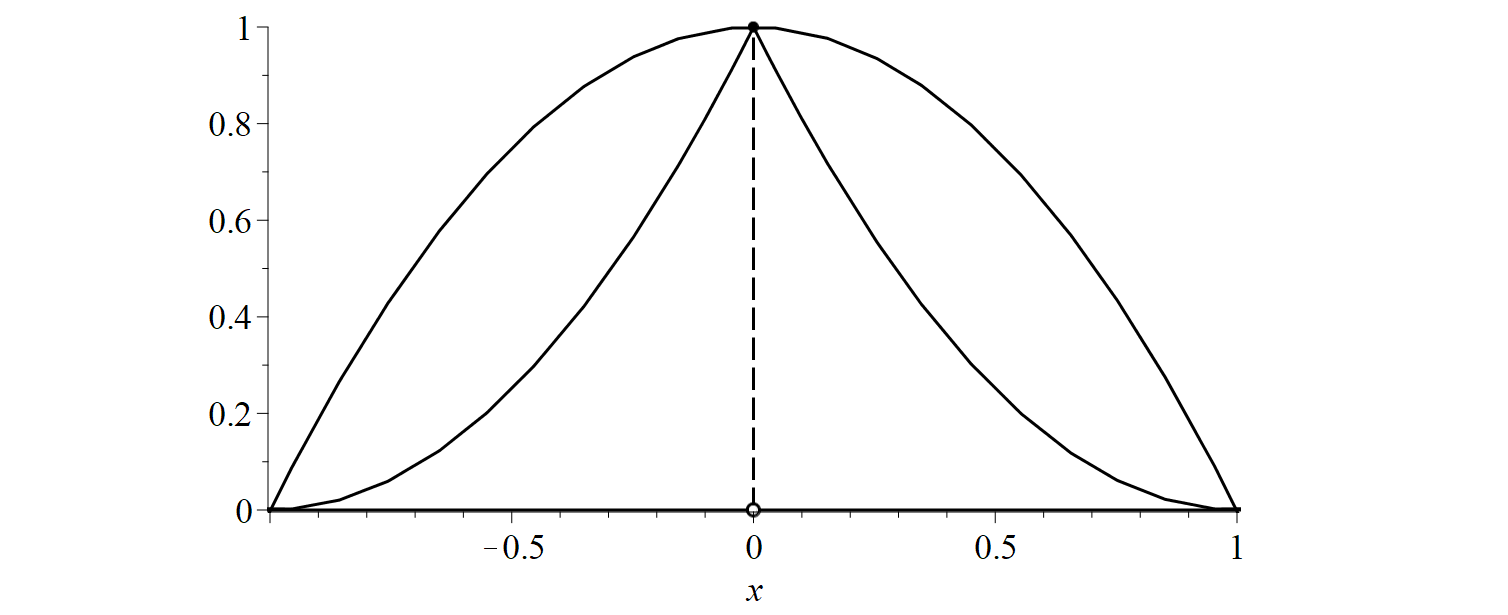}
    \end{subfigure}    
    \caption{Left: The function $\psi(x)=\cos(x)$ in the middle,
    surrounded by $\psi^c(T(x))$ and $\psi^c(x)$ for $c(x,y)=\lVert x-y\rVert^2/2$. Right: The function $y=1-x^2$ together with two potential dual functions $\psi^c$, and $\zeta$ given by the
    delta function at the origin.}
    \label{fig:cconv}
\end{figure}

In particular, for any $x$, the value $y=T(x)$
always realizes the supremum in \eqref{eq:defcconv}.
There may be more than one function $\zeta$ satisfying
\eqref{eq:defcconv}. For instance, if
$T$ is not surjective, we may always take
\begin{equation}
    \label{eq:defzeta}
    \zeta(y)=\begin{cases}
        \psi^c(y)
        & \mbox{$T(x)=y$ for some $x$} \\
        -\infty & \mbox{otherwise},
    \end{cases}
\end{equation}
which would be different from $\psi^c$.
While we do not have a closed expression 
for $\zeta$ or $\psi^c$ at a given point $y$, we can always calculate
$\zeta(T(x))=\psi^c(T(x))$ from \eqref{eq:transportcondition}.

Figure \ref{fig:cconv} 
illustrates how these objects are related.
First, the function 
    $\psi(x)=\cos(x)$, which is $c$-convex for $c(x,y)=\lVert x-y\rVert^2/2$ is shown. The transport map
    $T(x)=x+d/dx \cos(x)$ is the one that sends the contact point of the upside-down parabola (dashed) to the point $y$ corresponding to the peak. The lower graph is the conjugate function $\psi^c$ which is traced out by the peaks, while the outer one is 
    $\psi^c(T(x))=\psi(x)+c(x,T(x))$. On the right, we have
    a potential function given by $\psi(x)=1-x^2$ on the restricted domain $[-1,1]$. This function has more than one possible function $\zeta$ in Definition \ref{def:cconv}, namely the  function $\psi^c$ shown below it and the delta function at the origin, which is the function defined by \eqref{eq:defzeta}.

In our setting, we have $\mathcal{X}=\Delta^+_n$, 
$\mathcal{Y}=\psimp$,
$c(p,q)=\kl{p}{q}$,
and $\psi$ is the function from \eqref{eq:defpsi}.
We will show that $\psi$ is $c$-convex, and that there is a unique transport map defined as follows.
For any vector
$a=(a_j)_{j=1}^n$ with positive entries, we will use the notation
\begin{equation}
\label{eq:normprob}
[a]=[a_j]_{j=1}^n=p \in \Delta^+_n,\ \ p_j=
a_j/(a_1+\cdots+a_n).
\end{equation}
We then define
\begin{equation}
\label{eq:kltransport}    
T(p)=[q_{1j}^{u_1} \cdots q_{mj}^{u_m}]_{j=1}^n,\ \ 
u=[c_i \exp(-c(p,q_{i\bullet}))]_{i=1}^m \in \Delta_m^+.
\end{equation}

%
%

We may now state our theorem. Let us denote
$\mathcal{P}=\mathcal{Q}=\Delta_n^+$ in place of $\mathcal{X}$
and $\mathcal{Y}$, and
consider the super-level set
\begin{equation}
\label{eq:psilev}
\PP(a)=\left\{p\in \psimp : \psi(p)\geq a\right\}.
\end{equation}
We define 
\begin{equation}
\label{eq:defqq}
    \QQ(a)
    =\left\{T(p):\psi(p)+c(p,T(p))\geq a\right\},
\end{equation}
which agrees with the super-level set
$\left\{q:\zeta(q)\geq a\right\}$ when 
when $T$ is a transport map and
$\zeta$
is defined by \eqref{eq:defzeta}. Then we have
\begin{thm}
\label{thm:main}
Let $\psi$ be defined by \eqref{eq:defpsi}, 
let $T$ be defined by \eqref{eq:kltransport},
and let $\PP(a)$ and $\QQ(a)$ be the sets from 
\eqref{eq:psilev} and \eqref{eq:defqq}. Then
\begin{enumerate}
\item 
We have that $\psi$ is $c$-convex
for $c(p,q)=\kl{p}{q}$,
and that $T$ is the unique transport map
associated to $\psi$.
\item \label{item:homotopy} We have inclusions
$T(\mathcal{P}(a))\subset \qspace(a)\subset \pspace(a)$ for all 
$a\in \mathbb{R}$, which induce homotopy 
equivalences
$T(\mathcal{P}(a))\simeq \QQ(a) \simeq \PP(a)$.
The inverse equivalences are all determined by
the restriction of $T$.
\end{enumerate}
    
\end{thm}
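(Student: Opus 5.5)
The plan is to reduce everything to one algebraic identity for geometric averages of the rows of $Q$. For $u\in\Delta_m^+$ write $q(u)=[q_{1j}^{u_1}\cdots q_{mj}^{u_m}]_{j=1}^n$ and $Z(u)=\sum_j\prod_i q_{ij}^{u_i}$. Expanding $\log q(u)_j=\sum_i u_i\log q_{ij}-\log Z(u)$ inside the definition of $\kl{p}{q(u)}$ gives
\begin{equation*}
\sum_{i=1}^m u_i\,\kl{p}{q_{i\bullet}}=\kl{p}{q(u)}-\log Z(u),
\end{equation*}
and the right side depends on $p$ only through the first term.

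\textbf{Part 1: $c$-convexity.} I would combine the identity with the Gibbs variational formula $\log\sum_i e^{b_i}=\max_{u\in\Delta_m}\bigl(\sum_i u_ib_i+H(u)\bigr)$, where $H$ is the Shannon entropy. The maximum is attained exactly at $u=[e^{b_i}]$. Taking $b_i=\log c_i-\kl{p}{q_{i\bullet}}$ gives
\begin{equation*}
\psi(p)=\max_{u}\bigl(g(u)-\kl{p}{q(u)}\bigr),\qquad g(u)=\textstyle\sum_i u_i\log c_i+H(u)+\log Z(u).
\end{equation*}
Set $\zeta(q)=\sup\{g(u):q(u)=q\}$, with the convention $\sup\emptyset=-\infty$. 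Then $\psi(p)=\sup_q(\zeta(q)-\kl{p}{q})$, which is $c$-convexity. The supremum is attained at $q=q(u_p)=T(p)$ with $u_p$ as in \eqref{eq:kltransport}, so $T$ is a transport map.

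\textbf{Uniqueness of $T$.} Suppose $q$ satisfies $\psi^c(q)-\psi(p)=\kl{p}{q}$. Then $p$ minimizes $p'\mapsto\psi(p')+\kl{p'}{q}$ on the open simplex $\Delta_n^+$, so the gradient of this function at $p$ must be orthogonal to the simplex, i.e.\ a constant vector. Using $\nabla\psi(p)=-\sum_i u_i(\log p-\log q_{i\bullet}+1)$ with $u=u_p$, and $\nabla_p\kl{p}{q}=\log p-\log q+1$, this condition becomes $\log q=\sum_iu_i\log q_{i\bullet}+\mathrm{const}$. Hence $q=T(p)$.

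\textbf{Part 2: the inclusions.} The inclusion $T(\PP(a))\subset\QQ(a)$ holds because $c\ge0$. For $\QQ(a)\subset\PP(a)$, I apply $c$-convexity at the point $T(p)$ with competitor $y=T(p)$:
\begin{equation*}
\psi(T(p))\ge\zeta(T(p))-c(T(p),T(p))=\psi(p)+c(p,T(p)).
\end{equation*}
By the same inequality, $T$ maps $\PP(a)$ into $\QQ(a)$, because $\psi(p)+c(p,T(p))\ge\psi(p)\ge a$.

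\textbf{Part 2: homotopy on $\PP(a)$.} For the homotopies I use the exponential-family geodesic $p_s=[p^{1-s}T(p)^s]$, $s\in[0,1]$, which joins $p$ to $T(p)$. The key lemma is that $s\mapsto\kl{p_s}{q}$ is nonincreasing when $q=T(p)$. This should follow from convexity of the log-partition function along the one-parameter exponential family. Granting it,
\begin{equation*}
\psi(p_s)\ge\zeta(T(p))-\kl{p_s}{T(p)}\ge\zeta(T(p))-\kl{p}{T(p)}=\psi(p).
\end{equation*}
So the path stays in $\PP(a)$, and $\iota\circ T\simeq\mathrm{id}$ on $\PP(a)$, where $\iota:\QQ(a)\hookrightarrow\PP(a)$ is the inclusion.

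\textbf{Part 2: homotopy on $\QQ(a)$ (main obstacle).} The remaining step is to show $T\circ\iota\simeq\mathrm{id}$ inside $\QQ(a)$, and similarly for $T(\PP(a))$. The naive geodesic from $q$ to $T(q)$ only stays in $\PP(a)$, not obviously in $\QQ(a)$. My first attempt for $q=T(p)\in\QQ(a)$ is to use the path $s\mapsto T(p_s)$, which runs from $q$ to $T(q)$. I would try to show that $s\mapsto\zeta(T(p_s))=\psi(p_s)+c(p_s,T(p_s))$ is nondecreasing, by differentiating in $s$ and using the first-order characterization of $T$ established in Part 1. Failing that, I would look for a direct monotonicity of $\zeta$ along geodesics in the $u$-parametrization, since $\zeta\circ q$ is explicit through $g$.

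Once these homotopies are in hand, the three spaces are homotopy equivalent via the inclusions, with inverses given by restrictions of $T$.
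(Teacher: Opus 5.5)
Your Part 1 is correct and takes a different route from the paper. The Gibbs variational formula gives you an explicit $\zeta$, whereas the paper shows that $p\mapsto\psi(p)+\kl{p}{T(\bar p)}$ is convex with a critical point at $\bar p$. In fact your $g$ is exactly the paper's dual potential $\tilde\psi$ for the normalized transpose data $\tilde Q,\tilde c$, and $g(u_p)=\psi(p)+c(p,T(p))$.

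The inclusions are also fine. So is the homotopy $\iota\circ T\simeq\mathrm{id}_{\PP(a)}$ along $p_s$, and your monotonicity of $s\mapsto\kl{p_s}{q}$ via convexity of the log-partition function is a cleaner proof of the paper's flow lemma. But this is only one of the two composites. It shows that $\PP(a)$ is dominated by $\QQ(a)$ and by $T(\PP(a))$, so none of the three homotopy equivalences is actually established.

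The step you flag is a genuine gap, and your first attempt cannot work as stated. The path $s\mapsto T(p_s)$ depends on a choice of $p\in T^{-1}(q)$, and $T$ is far from injective: when $m<n$, the map $p\mapsto u_p$ alone already has fibers of dimension at least $n-m$. Moreover $T:\PP(a)\to\QQ(a)$ is in general not surjective. So there is no evident continuous choice of preimage from which to build a homotopy on $\QQ(a)$. Also, the monotonicity the structure actually provides is along the straight segment $(1-s)p+sT(p)$, not along $p_s$.

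The paper never builds a homotopy on $\QQ(a)$ itself; three ideas are missing from your proposal.
\begin{enumerate}
\item The fibers of $T$ are convex. $T(p)=q$ exactly when $p$ is a critical point, hence a minimizer, of the convex function $p\mapsto\psi(p)+\kl{p}{q}$, so $T^{-1}(q)$ is an argmin set.
\item Since $T$ surjects onto $\QQ(a)$ only from the larger set $\RR(a)=T^{-1}\QQ(a)=\{p:\psi(p)+c(p,T(p))\ge a\}\supset\PP(a)$, the paper works upstairs on $\RR(a)$. There $T:\RR(a)\to\QQ(a)$ is surjective with convex fibers, and the paper takes this as giving that $T$ is a homotopy equivalence. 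This replaces the composite you could not construct.
\item The straight segment $(1-s)p+sT(p)$ stays in $\RR(a)$. Because $\log u_p$ is affine in $p$ up to normalization, $u_{(1-s)p+sT(p)}=[u_p^{1-s}u_{T(p)}^s]$. One checks that $u_{T(p)}=\tilde T(u_p)$. Hence your flow lemma applied to $g=\tilde\psi$ gives $g(u_{(1-s)p+sT(p)})\ge g(u_p)$. This is the ``monotonicity in the $u$-parametrization'' you were looking for.
\end{enumerate}

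The third point yields $\iota\circ T\simeq\mathrm{id}_{\RR(a)}$. Together with the convex fibers, this gives $\RR(a)\simeq\QQ(a)$ with the inclusion as inverse. The same segment also gives a deformation retraction of $\RR(a)$ onto $\PP(a)$: slide along it until the point enters the ball $B_c(T(p);\zeta(T(p))-a)\subset\PP(a)$.

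For $T(\PP(a))\simeq\PP(a)$, your $p_s$-homotopy suffices once combined with convexity of the fibers $T^{-1}(q)\cap\{p:\kl{p}{q}\le\psi^c(q)-a\}$ of $T|_{\PP(a)}$.
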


As in 
\cite{carlsson2025kernel,carlsson2025cosim},
we interpret the nested family of regions
$\QQ(a)$ as a continuous version of
an \emph{alpha shape} \cite{edelsbrunner1983shape}, and the equivalence $\PP(a)\simeq \QQ(a)$
as a counterpart to Edelsbrunner's theorem
\cite{edelsbrunner1995union}, which identifies its homotopy type with that
of the power diagram by using an explicit 
deformation retraction. 
The reason for making this connection is that
we can reformulate \eqref{eq:defcconv} as
\begin{equation}
\label{eq:cconvdiscs}    
\PP(a)=\bigcup_{q\in \QQ(a)} B_{c}(q;\zeta(q)-a),\ \ 
B_c(q;r)=\left\{p\in \Delta^+_n: c(p,q)\leq r\right\},
\end{equation}
which is precisely the relationship between a power diagram
and the weighted sites in the alpha complex with
$-\zeta$ in place of the weight map $\alpha$. 

We also remark that the statement of item \ref{item:homotopy} cannot 
be interpreted as continuum limit of nerve theorems, which would not say anything about the topology of a continuous set of 
centers in a covering. 
Additionally, Edelsbrunner's deformation retract would be from
$\PP(a)$ down to $\QQ(a)$ under the analogy of \eqref{eq:cconvdiscs}, whereas ours is based on another space given by
$\mathcal{R}(a)=T^{-1} \mathcal{Q}(a)$. 
Finding a deformation retract from $\PP(a)$ to $\QQ(a)$ involving
the transport map appears difficult due to the fact that the restriction of $T$ to $\PP(a)$ does not surject onto $\QQ(a)$, 
$T$ does not act as the identity on $\QQ(a)$, and
testing membership $q\in\QQ(a)$ does not have a closed form without
expressing $q=T(p)$ for some $p$.

\section{Proof of Theorem \ref{thm:main}}

\label{sec:proofs}

In this section we prove Theorem \ref{thm:main}, and then show in
Section \ref{sec:gaussians} that Gaussian kernel density estimators
arise as a special case.

\subsection{Proof of c-convexity}

Let $Q$ be an $m\times n$ positive stochastic matrix, and let $\psi$, and $T$ be given as in the previous section for some coefficients $c_i>0$, and let $\mathcal{P}(a),\mathcal{Q}(a)$ 
be the super-level sets.

\begin{lemma}
\label{lem:istransport}    
Let $\bar{q}=T(\bar{p})$ for some $\bar{p}\in \Delta_n$.
Then $p=\bar{p}$ is a global
minimizer of the expression
$\psi(p)+c(p,\bar{q})$.

\end{lemma}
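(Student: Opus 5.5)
The plan is to write $\psi$ as a supremum of functions of the form $-\kl{p}{r}+\mathrm{const}$, using the Gibbs variational principle. Once that is done, the lemma reduces to evaluating one term of the supremum at a single fixed weight vector.

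\emph{Step 1 (variational formula).} For real numbers $f_1,\dots,f_m$ and the weights $c_i>0$, the Gibbs variational principle gives
\begin{equation*}
\log\sum_{i=1}^m c_i e^{-f_i}=\max_{u\in\Delta_m}\Big(-\sum_i u_i f_i-\sum_i u_i\log(u_i/c_i)\Big).
\end{equation*}
The maximum is attained uniquely at $u=[c_ie^{-f_i}]_{i=1}^m$. This follows from nonnegativity of $\KL$ between $u$ and the normalized vector $[c_ie^{-f_i}]$. I apply it with $f_i=\kl{p}{q_{i\bullet}}$. The maximizer is then exactly the vector $u$ appearing in \eqref{eq:kltransport}.

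\emph{Step 2 (rewriting the mixed cost).} For fixed $u\in\Delta_m$, define $r_u=[\prod_i q_{ij}^{u_i}]_{j=1}^n\in\Delta_n^+$ and $Z_u=\sum_j\prod_i q_{ij}^{u_i}$. A direct computation, exchanging the sums over $i$ and $j$ and using $\sum_i u_i=1$, gives
\begin{equation*}
\sum_i u_i\,\kl{p}{q_{i\bullet}}=\sum_j p_j\log p_j-\sum_j p_j\log\prod_i q_{ij}^{u_i}=\kl{p}{r_u}-\log Z_u .
\end{equation*}
Combining this with Step 1 yields
\begin{equation*}
\psi(p)=\max_{u\in\Delta_m}\Big(-\kl{p}{r_u}+\log Z_u-\sum_i u_i\log(u_i/c_i)\Big).
\end{equation*}
For each $p$, the maximizing $u$ is the one from \eqref{eq:kltransport}, and for that $u$ we have $r_u=T(p)$.

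\emph{Step 3 (conclusion).} Let $\bar u$ be the weight vector attached to $\bar p$, so that $\bar q=T(\bar p)=r_{\bar u}$. Dropping the maximum and keeping only the term $u=\bar u$ gives, for all $p\in\Delta_n^+$,
\begin{equation*}
\psi(p)+\kl{p}{\bar q}\ \ge\ \log Z_{\bar u}-\sum_i\bar u_i\log(\bar u_i/c_i).
\end{equation*}
Equality holds at $p=\bar p$, because $\bar u$ is the maximizer for $\bar p$. Hence $\bar p$ is a global minimizer, and the minimum value is $\log Z_{\bar u}-\sum_i\bar u_i\log(\bar u_i/c_i)$. The same representation also exhibits $\psi$ directly in the form \eqref{eq:defcconv}, which will be useful for part 1 of Theorem \ref{thm:main}.

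The main obstacle is Step 2: spotting that the $u$-weighted average of KL divergences collapses to a single KL divergence against the normalized geometric mean $r_u$. Once this identity is in hand, the rest is bookkeeping. One technical point is that $\bar p$ must lie in $\Delta_n^+$ so that all quantities are finite, and positivity of $Q$ ensures $r_u\in\Delta_n^+$.
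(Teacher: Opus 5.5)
Your argument is correct, and it follows a genuinely different route from the paper. The paper argues by calculus. It computes the gradient of $F(p)=\psi(p)+\kl{p}{\bar q}$ modulo $\langle\mathbf{1}\rangle$, checks that it vanishes at $\bar p$, and shows that $F$ is convex because its Hessian is a $u$-weighted covariance matrix, positive semidefinite by Cauchy--Schwarz. You instead combine the Gibbs variational principle with the identity $\sum_i u_i\kl{p}{q_{i\bullet}}=\kl{p}{r_u}-\log Z_u$. This writes $\psi$ as an explicit envelope $\psi(p)=\max_{u\in\Delta_m^+}\bigl(\tilde\psi(u)-\kl{p}{r_u}\bigr)$, where the constant $\log Z_u-\sum_i u_i\log(u_i/c_i)$ is exactly the potential $\tilde\psi(u)$ built from the transposed data \eqref{def:tildestoch}. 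Global minimality is then read off from a single term, with no derivatives.

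Your route gives more than the lemma. It exhibits $\psi$ directly in the form \eqref{eq:defcconv}, and it avoids the differential bookkeeping altogether. It also identifies the minimum value as $\psi^c(T(\bar p))=\tilde\psi(\bar u)$, which is the first identity of Lemma \ref{lem:dualpsi} that the paper leaves to direct computation.

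What the paper's route gives is convexity of $F$ itself, which is reused in the proof of Theorem \ref{thm:main} to see that the fibers of $T$ are convex. Your representation yields this for free: $\kl{p}{\bar q}-\kl{p}{r_u}=\sum_j p_j\log(r_{u,j}/\bar q_j)$ is linear in $p$, so $F$ is a pointwise maximum of affine functions. (Minor: restricting to $\bar p\in\Delta_n^+$ is not needed for finiteness, since $q_{ij}>0$ keeps every divergence finite on the closed simplex.)
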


\begin{proof}

For fixed $\bar{q}=T(\bar{p})$,
we will show that the function $F(p)=\psi(p)+c(p,\bar{q})$
is convex, and that its 
gradient is zero at $p=\bar{p}$ for all $p\in \Delta^+_n$.

The gradient will be realized by regarding
$F(p)$ as a function on 
$\mathbb{R}_{>0}^n$ whose tangent space is
$\mathbb{R}^n$, and then taking the image in
$\mathbb{R}^n/\langle {\mathbf 1}\rangle=T^*_p \Delta^+_n$.
First, we calculate
\[
\frac{\partial}{\partial p_j} \psi(p)=
\frac{\partial}{\partial p_j} \log \sum_{k} c_k \exp(-c(p,q_{k\bullet}))=
\sum_{k} u_k (\log(p_j/q_{kj})+1)
\]
where $u_k$ are the weights in \eqref{eq:kltransport}.
On the other hand, we have
\[\frac{\partial}{\partial p_j} \kl{p}{\bar{q}}=\log(p_j)-s^{-1}\log(q_j+1)=
\log(p_j)-\sum_{k} 
\bar{u}_k\log(q_{kj})+C\]
where $\bar{u}_k$ are the weights
at $p=\bar{p}$, and 
$C$ is a constant that does not depend on $j$. Then the two sum to zero as elements of 
$\mathbb{R}^n/\langle \mathbf{1}\rangle$ when $p=\bar{p}$, so that $u_k=\bar{u}_k$,
showing that $\nabla_pF=0$ at that value.

To establish the convexity, we take another partial derivative to obtain the Hessian matrix $H=(H_{ij})$, where 
\[H_{ij}=\frac{\partial}{\partial p_i} 
\frac{\partial}{\partial p_j} 
F(p)=\sum_k (\log(p_j/p_{kj})+1) 
\frac{\partial}{\partial p_i} u_k=\]
\[\sum_k u_k (\log(p_i/p_{ki})+1)
(\log(p_j/p_{kj})+1) -\]
\[\left(\sum_{k} u_k (\log(p_i/p_{ki})+1)\right)
\left(\sum_{k} u_k (\log(p_j/p_{kj})+1)\right).\]
Note that the result does not 
depend on $\bar{q}$, and that
we have canceled two 
delta function
factors $\delta_{ij}$.

It suffices to show that $H$ is positive semi-definite.
For this, take an arbitrary dot product 
\[v^tHv=\left(\sum_{k} u_k x^2_k\right)-\left(\sum_k u_k x_k\right)\left(\sum_k u_k x_k\right)\]
for any vector $v\in \mathbb{R}^n$, where
\[x_k=\sum_{i} v_i \left(\log(p_i/p_{ki})+1\right)\]
Positive semi-definiteness now follows from 
the Cauchy-Schwarz inequality, noting that the weights $u_k$ are positive and sum to one.
\end{proof}

Then we have the following
\begin{cor}
\label{cor:cconv}
The potential $\psi$ is $c$-convex with unique transport map $T$, 
and the above functions satisfy
\begin{equation}
\label{eq:ineq_allthree}    
\zeta(p)\leq \psi^c(p) \leq \psi(p)\leq \zeta(T(p)).
\end{equation}
\end{cor}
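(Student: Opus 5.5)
Everything follows from Lemma \ref{lem:istransport}, applied point by point. Fix $p\in\psimp$ and set $\bar q=T(p)$. By the lemma, $p$ is a global minimizer of $p'\mapsto \psi(p')+c(p',\bar q)$. Hence
\[
\psi^c(T(p))=\inf_{p'}\big(\psi(p')+c(p',T(p))\big)=\psi(p)+c(p,T(p)),
\]
which is exactly the transport condition \eqref{eq:transportcondition}. For $c$-convexity, note that the definition of $\psi^c$ always gives $\psi^c(q)-c(p,q)\le\psi(p)$ for all $p,q$, so $\sup_q(\psi^c(q)-c(p,q))\le\psi(p)$. Taking $q=T(p)$ attains this value. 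So \eqref{eq:defcconv} holds with $\zeta=\psi^c$, and also with the $\zeta$ of \eqref{eq:defzeta}, since that supremum is attained on the image of $T$. This proves that $\psi$ is $c$-convex and that $T$ is a transport map.

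For uniqueness, suppose $T'$ is any transport map. Then $q'=T'(p)$ realizes the supremum in \eqref{eq:defcconv}, so $p$ minimizes $p'\mapsto\psi(p')+c(p',q')$. Because $p$ is interior to $\psimp$, the first-order condition in $\mathbb{R}^n/\langle\mathbf 1\rangle$ must hold there. Using the gradient computation from the proof of Lemma \ref{lem:istransport}, this condition reads
\[
-\log q'_j+\sum_k u_k\log q_{kj}\equiv \mathrm{const}\ (\text{in } j),
\]
with $u$ the weights at $p$. Since $q'\in\psimp$, normalizing forces $q'=[\prod_k q_{kj}^{u_k}]_j=T(p)$. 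The main thing to be careful about is this uniqueness step, because the lemma only gives sufficiency. The derivative identity itself is already computed in the lemma, and the normalization $[\cdot]$ pins down the constant.

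Finally, the chain \eqref{eq:ineq_allthree}, read with the same point $p$ in each slot. The bound $\zeta\le\psi^c$ holds by the case definition \eqref{eq:defzeta}. The bound $\psi^c(p)\le\psi(p)$ comes from taking $p'=p$ in the infimum and using $c(p,p)=0$. The bound $\psi(p)\le\zeta(T(p))$ holds because $T(p)$ lies in the image of $T$, so $\zeta(T(p))=\psi^c(T(p))=\psi(p)+c(p,T(p))\ge\psi(p)$ by nonnegativity of the KL divergence.
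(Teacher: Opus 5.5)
Your proof is correct and follows the paper's argument: Lemma~\ref{lem:istransport} gives $\psi^c(T(p))=\psi(p)+c(p,T(p))$ and attainment of the supremum, the bound $\psi^c(q)-c(p,q)\le\psi(p)$ gives $c$-convexity, and $c(p,p)=0\le c(p,T(p))$ gives the chain. Your uniqueness step is the paper's twist-condition argument written out explicitly: you also justify why a competing transport map must satisfy the first-order condition, and your condition $\sum_k u_k\log q_{kj}-\log q'_j\equiv\mathrm{const}$ correctly uses $\partial_j\psi=-\sum_k u_k(\log(p_j/q_{kj})+1)$, with the minus sign that the lemma's displayed formula omits.
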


\begin{proof}

The argument is similar to the one explained after the differential criteria for $c$-convexity in \cite{villani2008old}:
by Lemma \ref{lem:istransport},
we have
\begin{equation}
\label{eq:cconv_equiv}    
\psi(p)+c(p,\bar{q})\geq
\psi(\bar{p})+c(\bar{p},\bar{q})
\end{equation}
for all $p\in \psimp$. In particular,
we have that
$\psi^c(\bar{q})=\psi(\bar{p})+c(\bar{p},\bar{q})$
whenever $\bar{q}=T(\bar{p})$,
On the other hand, we always have 
$\psi(p)\geq \sup_{q} (\psi^c(q)-c(p,q))$, since
\[\psi^c(q)-c(p,q)=\inf_{p'} (\psi(p')+c(p',q))-c(p,q)
\leq \psi(p)\]
for all $p,q$. The previous statement shows that we have equality, so that $\zeta$ satisfies \eqref{eq:defcconv}.

This also establishes \eqref{eq:ineq_allthree},
where the final inequality follows from the definition of $\zeta$ and because $c(p,q)\geq 0$. 

The uniqueness of $T$ follows since
$c(p,q)$ can be seen to satisfy a property called the 
\emph{twist condition}
(see \cite{villani2008old}, equation (10.4)), which says that the function
$q\mapsto \nabla_p c(p,q)$ into the tangent space at $p$ is injective. In this case 
there can only be one
$q$ for which $\nabla_p(p)+\nabla_pc(p,q)=0$, and we must have
$T(p)=q$ in \eqref{eq:transportcondition}.

\end{proof}

\subsection{Proof of homotopy equivalence}

We now prove the item \ref{item:homotopy} of Theorem \ref{thm:main} 
using an explicit deformation retraction. We start with more
lemmas.

\begin{lemma}
\label{lem:flow}    
Suppose that $q=T(p)$, and let
$p_s=[p^{1-s}q^s]$. Then we have
that $\psi(p)\leq \psi(p_s)$ for $0\leq s \leq 1$.
\end{lemma}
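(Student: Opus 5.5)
The approach is to avoid differentiating $\psi$ along the curve $p_s$. Instead I will bound $\psi$ from below by a single KL term to the point $q=T(p)$, then show that this KL term decreases monotonically along the geometric path from $p$ to $q$.

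\emph{Step 1 (Jensen lower bound).} Fix $p$ and let $u\in\Delta_m^+$ be the weights in \eqref{eq:kltransport} at $p$, so that $u_k = c_k e^{-c(p,q_{k\bullet})}/\sum_i c_i e^{-c(p,q_{i\bullet})}$. For any $p'\in\psimp$, write $\psi(p')=\log\sum_k u_k\cdot\bigl(c_k e^{-c(p',q_{k\bullet})}/u_k\bigr)$. Concavity of $\log$ then gives
\[
\psi(p')\ \geq\ \sum_k u_k\bigl(\log c_k-\log u_k\bigr)-\sum_k u_k\,\kl{p'}{q_{k\bullet}},
\]
with equality at $p'=p$, since there the ratio inside is constant in $k$. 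Now expand the last sum:
\[
\sum_k u_k\,\kl{p'}{q_{k\bullet}}=\sum_j p'_j\log p'_j-\sum_j p'_j\log\prod_k q_{kj}^{u_k}=\kl{p'}{q}-\log Z,
\]
where $q=T(p)=[\prod_k q_{kj}^{u_k}]$ and $Z=\sum_j\prod_k q_{kj}^{u_k}$. The quantity $Z$ does not depend on $p'$. Hence there is a constant $C$, depending only on $p$, such that $\psi(p')\geq C-\kl{p'}{q}$ with equality at $p'=p$. Subtracting the two relations yields
\[
\psi(p')\ \geq\ \psi(p)+\kl{p}{q}-\kl{p'}{q}.
\]
This is essentially a restatement of Lemma \ref{lem:istransport}, but obtained without the Hessian.

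\emph{Step 2 (monotonicity of KL along the geometric path).} It remains to show that $f(s)=\kl{p_s}{q}$ is nonincreasing on $[0,1]$. Setting $p'=p_s$ in Step 1 then gives $\psi(p_s)\geq\psi(p)$. Write $h_j=\log(p_j/q_j)$ and $t=1-s$. Then $p_s$ is an exponential tilt of $q$:
\[
p_{s,j}=q_j e^{th_j-A(t)},\qquad A(t)=\log\sum_j q_je^{th_j}.
\]
Consequently $\kl{p_s}{q}=tA'(t)-A(t)=:g(t)$, and $g'(t)=tA''(t)$. Since $A$ is a cumulant generating function, $A''(t)$ is a variance and so $A''\geq 0$. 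Thus $g'(t)\geq0$ for $t\geq 0$. So $g$ is nondecreasing in $t\in[0,1]$, which means $f$ is nonincreasing in $s$, as required.

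I expect no serious obstacle. The one point needing care is the algebra in Step 1, namely checking that the $u$-weighted average of the $\log q_{kj}$ is exactly $\log q_j$ up to the normalizing constant $\log Z$. This is precisely where the specific form of $T$ in \eqref{eq:kltransport} enters. The rest is the standard fact that KL divergence to the base point of an exponential family increases with the tilt parameter.
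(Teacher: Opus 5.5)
Your proof is correct, and it takes a different route from the paper's. The paper never looks at $\kl{p_s}{q}$. Instead it rescales the cost to $c_s(p,q)=s^{-1}\kl{p}{q}$ and asserts, by redoing the gradient and Hessian computation of Lemma \ref{lem:istransport}, that $\psi$ is $c_s$-convex with transport map $T_s(p)=p_s$. It then reads $\psi(p)\le\zeta_s(p_s)\le\psi(p_s)$ off the chain \eqref{eq:ineq_allthree}. Unwound, this says $\psi(p_s)\ge\psi(p)+s^{-1}\kl{p}{p_s}$. The restriction $s\le 1$ is what guarantees that $\psi+s^{-1}\kl{\cdot}{q'}$ is convex, because $\psi$ is the concave entropy $-\sum_j p_j\log p_j$ plus a convex log-sum-exp of linear functions.

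You use only the unscaled cost. Your Jensen step is a cleaner proof of Lemma \ref{lem:istransport}: it gives global minimality of $\psi(\cdot)+\kl{\cdot}{q}$ at $p$ directly, with no convexity-plus-critical-point argument. All dependence on the path is then pushed into the elementary exponential-family fact that $\kl{p_s}{q}$ decreases along the geometric path.

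The paper's route buys a structural statement: each $p_s$ is the transport image of $p$ for the rescaled cost $c_s$, so the flow is a one-parameter family of transport maps. The price is a $c_s$-convexity check for every $s$, which the paper only sketches. Your route is self-contained and yields $\psi(p_s)-\psi(p)\ge\kl{p}{q}-\kl{p_s}{q}$, which is never weaker than the paper's bound. In your notation, with $t=1-s$ and $0<s\le 1$, it exceeds $s^{-1}\kl{p}{p_s}$ by $-\tfrac{t}{1-t}\bigl[A(t)+(1-t)A'(t)\bigr]$. This is nonnegative because $A$ is convex and $A(1)=0$. For small $s$ your bound is roughly twice the paper's.
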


\begin{proof}

By slightly extending the calculations from 
Lemma \ref{lem:istransport}, we can see that
$\psi$ is $c$-convex for the cost function 
$c_s(p,q)=s^{-1}\kl{p}{q}$ for any $0<s\leq 1$,
and that the transport map is $T_s(p)=p_s$.
Inserting $T=T_s$ 
and the corresponding definition of $\zeta_s$
into \eqref{eq:ineq_allthree}, we find that
$\psi(p)\leq \zeta_s(p_s)\leq \psi(p_s)$.

\end{proof}

Given $Q=(q_{ij})$ and $c=(c_i)$, let us define the transformed data
$\tilde{Q}=(\tilde{q}_{ij})$ and $\tilde{c}=(\tilde{c}_i)$ 
to be the $n\times m$ normalized transpose matrix and $n$-dimensional coefficient vector given by
\begin{equation}
\label{def:tildestoch}
    \tilde{q}_{ji}=c_i q_{ij}/\tilde{c_j},\ \ 
    \tilde{c}_j=c_1q_{1j}+\cdots+c_mq_{mj}
\end{equation}
Inserting these into \eqref{eq:defpsi}, 
we then obtain another potential function on the 
positive $m$-dimension probability simplex 
$\tilde{\psi}:\Delta^+_m\rightarrow \mathbb{R}$, 
as well as the transport map $\tilde{T}:\Delta_m\rightarrow \Delta^+_m$.

We have the following relations between these maps, 
which will be useful for proving 
both the homotopy statements, and a relation with the Gaussian case in Section \ref{sec:gaussians}:
\begin{lemma}
\label{lem:dualpsi}    
Let $u$ be the weight vector from
\eqref{eq:kltransport} associated to $p$,
and let $v$ be the one associated to $q=T(p)$. Then
we have
\begin{equation}
\tilde{\psi}(u)=\psi(p)+c(p,T(p)),\ \ 
\tilde{T}(u)=v.
\end{equation}
\end{lemma}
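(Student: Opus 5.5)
The plan is a direct computation. I will unwind both sides into the same two quantities: the normalizing constant of $T(p)$ and the weights $u$. Fix $p\in\psimp$ and write
\[
Z=\sum_k c_k e^{-\kl{p}{q_{k\bullet}}},\qquad u_k=c_k e^{-\kl{p}{q_{k\bullet}}}/Z .
\]
Then $\psi(p)=\log Z$. Also put
\[
W=\sum_j\prod_k q_{kj}^{u_k},\qquad q_j=T(p)_j=\prod_k q_{kj}^{u_k}/W .
\]
The key identity, used repeatedly, is
\[
\kl{p}{q_{k\bullet}}=\log c_k-\log Z-\log u_k,
\]
which follows by taking logs in the definition of $u_k$.

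\emph{First identity.} Expand
\[
\kl{p}{q}=\sum_j p_j\log p_j-\sum_j p_j\sum_k u_k\log q_{kj}+\log W .
\]
The first two terms equal $\sum_k u_k\kl{p}{q_{k\bullet}}$, because $\sum_k u_k=1$. Substituting the key identity gives
\[
\psi(p)+c(p,T(p))=\log W+\sum_k u_k\log(c_k/u_k).
\]
On the other side, I expand $\kl{u}{\tilde q_{j\bullet}}$ using \eqref{def:tildestoch}:
\[
\kl{u}{\tilde q_{j\bullet}}=\log\tilde c_j+\sum_i u_i\log(u_i/c_i)-\sum_i u_i\log q_{ij}.
\]
Hence
\[
\tilde c_j\,e^{-\kl{u}{\tilde q_{j\bullet}}}=\exp\Bigl(\sum_i u_i\log(c_i/u_i)\Bigr)\prod_i q_{ij}^{u_i},
\]
where the factor $\tilde c_j$ cancels. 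Summing over $j$ and taking the logarithm yields exactly $\log W+\sum_i u_i\log(c_i/u_i)$. This proves $\tilde\psi(u)=\psi(p)+c(p,T(p))$.

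\emph{Second identity.} The previous display also shows that the weight vector attached to $u$ in the definition of $\tilde T$ is
\[
\bigl[\tilde c_j e^{-\kl{u}{\tilde q_{j\bullet}}}\bigr]_j=\Bigl[\prod_i q_{ij}^{u_i}\Bigr]_j=q .
\]
So $\tilde T(u)_i$ is proportional to
\[
\prod_j\tilde q_{ji}^{\,q_j}=c_i\prod_j q_{ij}^{q_j}\cdot\prod_j\tilde c_j^{-q_j}.
\]
Here I used $\sum_j q_j=1$ to pull out $c_i$. The last factor does not depend on $i$, so it disappears after normalization. Meanwhile $v_i$ is proportional to
\[
c_i e^{-\kl{q}{q_{i\bullet}}}=c_i\,e^{-\sum_j q_j\log q_j}\prod_j q_{ij}^{q_j},
\]
and again the middle factor is independent of $i$. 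Both vectors are therefore the normalization of $\bigl[c_i\prod_j q_{ij}^{q_j}\bigr]_i$, so $\tilde T(u)=v$.

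The only real obstacle is bookkeeping: keeping track of which factors are constant in the free index, so that they drop out under the bracket $[\,\cdot\,]$ from \eqref{eq:normprob}. The observation that makes everything collapse is that the $\tilde c_j$ in $\tilde\psi$ exactly cancels the $\log\tilde c_j$ inside the divergence. Once this is seen, the $\tilde T$-weights at $u$ are identified with $T(p)$ itself, and both identities follow.
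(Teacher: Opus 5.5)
Your computation is correct and is the same direct rearrangement of sums that the paper invokes; the paper's proof only asserts that the identities follow from the definition of $\psi$ and the weights, and you have filled in those details. In particular, you check that the $\tilde{T}$-weights at $u$ are exactly $T(p)$ once the $\tilde{c}_j$ factors cancel, and this is what makes $\tilde{T}(u)=v$ immediate.
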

\begin{proof}
This can be computed directly by rearranging sums using the definition
\eqref{eq:defpsi} of $\psi$ and the weights \eqref{eq:kltransport}. 
\end{proof}

We can now proceed to the homotopy statement from Theorem \ref{thm:main}. Following \cite{carlsson2025kernel,carlsson2025cosim}, the best approach turns out not to be to find a deformation retraction from $\PP(a)$ to $\QQ(a)$, 
since $T$ does not act by the
identity on $\QQ(a)$, and $\zeta$ does not have a closed expression. Instead, we introduce a third space
\begin{equation}
\label{eq:defR}    
\RR(a)=T^{-1} \QQ(a)=\left\{p: \psi(p)+c(p,T(p))\geq a\right\} \supset \PP(a)\supset \QQ(a) 
\end{equation}
We will produce deformation retracts from this space to 
both $\PP(a)$ and $\QQ(a)$, showing that all three are
homotopy equivalent.

Both deformation retracts will be based on the following.
\begin{lemma}
\label{lem:path}
Let $q=T(p)$ for $p\in \RR(a)$. Then we have that
$(1-s)p+sq\in \RR(a)$ for all $0\leq s \leq 1$.
\end{lemma}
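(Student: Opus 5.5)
The plan is to show that $G(x)=\psi(x)+c(x,T(x))$ is nondecreasing along the segment $p_s=(1-s)p+sq$, $0\le s\le 1$, where $q=T(p)$. Then $G(p_s)\ge G(p)\ge a$, which says exactly that $p_s\in\RR(a)$. By Corollary~\ref{cor:cconv}, $G(x)=\psi^c(T(x))=\zeta(T(x))$, so $G$ is the conjugate evaluated along the transport map.

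\emph{Step 1: reduce $G$ to a function of the weights.} Write $u(x)\in\Delta_m^+$ for the weights of \eqref{eq:kltransport}. For every $i$ we have $\psi(x)=\log c_i-\kl{x}{q_{i\bullet}}-\log u_i$. Averaging this identity against $u$ and adding $c(x,T(x))$, the $x\log x$ terms and the $\sum_i u_i\sum_j x_j\log q_{ij}$ terms cancel, leaving
\[
G(x)=\sum_i u_i\log(c_i/u_i)+\log\sum_j\prod_i q_{ij}^{u_i},
\]
which is $\tilde\psi(u)$, in agreement with Lemma~\ref{lem:dualpsi}. So the problem becomes: track $u(p_s)$ and show that this expression does not decrease.

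\emph{Step 2: differentiate using the envelope property.} Since $\nabla_x\bigl(\psi(x)+c(x,y)\bigr)=0$ at $y=T(x)$ (Lemma~\ref{lem:istransport}), only the $y$-dependence survives:
\[
\frac{d}{ds}G(p_s)=\nabla_y c(p_s,y)\big|_{y=T(p_s)}\cdot\frac{d}{ds}T(p_s),
\]
with $\nabla_y\kl{x}{y}=-x/y$ modulo constants. Chasing this through $T(x)=[\prod_i q_{i\bullet}^{u_i}]$, the derivative becomes a pairing $\dot u\cdot w$, where $w_k=\sum_j r_j\log q_{kj}-\sum_j p_{s,j}\log q_{kj}$ and $r=T(p_s)$. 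Differentiating the softmax defining $u$ gives $\dot u=\mathrm{Cov}_u$-type expressions in the vectors $\log(p_{s}/q_{k\bullet})$ applied to the direction $q-p$. The Hessian computation of Lemma~\ref{lem:istransport}, which is independent of the target point, then writes the derivative as a $u$-weighted covariance. At $s=0$ this reduces to a variance, hence is $\ge0$, by the same Cauchy--Schwarz argument.

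\emph{Main obstacle and fallback.} For $s>0$ the direction $q-p$ is no longer the direction $T(p_s)-p_s$, so the covariance is not obviously a variance. I expect this to be the hard step. I would first try to compare with the geometric path $[p^{1-s}q^s]$ of Lemma~\ref{lem:flow}, on which $\psi$ is already known to increase. Alternatively, I would use convexity of $F_y(x)=\psi(x)+c(x,y)$ in $x$, whose Hessian does not depend on $y$, as follows. For each $s$, $G(p_s)=\min_x F_{T(p_s)}(x)$. I would then bound this minimum below by $F_q(p)=G(p)$ using a first-order convexity inequality for $F$ between $p$ and $p_s$, together with $c(x,T(x))\ge0$ and the inequality chain \eqref{eq:ineq_allthree}. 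If neither route closes directly, I would prove monotonicity on a short interval $[0,\epsilon]$ from the $s=0$ variance computation, and extend it by a continuity and connectedness argument on the set of $s$ where the derivative is nonnegative.
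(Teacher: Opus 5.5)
\textbf{There is a genuine gap: the proposal never proves the key inequality $G(p_s)\ge G(p)$.}

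Your Step~1 is correct; it is the first half of Lemma~\ref{lem:dualpsi}. Your diagnosis in Step~2 is also accurate. $\frac{d}{ds}G(p_s)$ is the $u(p_s)$-weighted covariance of $k\mapsto\langle q-p,\log q_{k\bullet}\rangle$ and $k\mapsto\langle T(p_s)-p_s,\log q_{k\bullet}\rangle$, and this is a variance only at $s=0$. The proposal stops there.

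The missing idea is that the segment in $\Delta_n^+$ becomes a \emph{geometric} path in the weight simplex $\Delta_m^+$. The entropy term $\sum_j x_j\log x_j$ is common to all rows, so $u_i(x)\propto c_i\exp(\sum_j x_j\log q_{ij})$ is a softmax of a function affine in $x$. Hence $u(p_s)=[u_i^{1-s}v_i^s]_{i=1}^m$ with $u=u(p)$ and $v=u(q)$. By the second half of Lemma~\ref{lem:dualpsi}, $v=\tilde T(u)$, so $u(p_s)$ is exactly the flow of Lemma~\ref{lem:flow} for the dual potential $\tilde\psi$ started at $u$. That lemma gives $\tilde\psi(u(p_s))\ge\tilde\psi(u)$, and your Step~1 identity turns this into $G(p_s)\ge G(p)\ge a$. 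This is the paper's proof, and it needs no derivative computation.

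Note also that Lemma~\ref{lem:flow} is proved separately for each $s$, via $c_s$-convexity with $c_s=s^{-1}\kl{p}{q}$. It only compares $p_s$ with the starting point. Monotonicity of $G$ along the whole segment, which you set out to prove, is more than the statement needs and more than the available tools give.

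None of your fallbacks closes the gap as formulated:
\begin{itemize}
\item The first applies Lemma~\ref{lem:flow} to the wrong function on the wrong space. It controls $\psi$ along $[p^{1-s}q^s]$ in $\Delta_n^+$, whereas you need $G=\tilde\psi\circ u$, that is, $\tilde\psi$ along a geometric path in $\Delta_m^+$.
\item In the second, $F_y-F_q$ is linear in $x$, so the convexity inequality between $p$ and $p_s$ gives exactly $G(p_s)\ge G(p)+\kl{p_s}{T(p_s)}-\kl{p_s}{q}$. Neither $c\ge 0$ nor \eqref{eq:ineq_allthree} controls the sign of the last difference; using them for that is circular.
\item The third fails because the variance argument at an interior point $p_{s_0}$ applies to the direction $T(p_{s_0})-p_{s_0}$, not to $q-p$. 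So it cannot be restarted, and nothing shows that the set of $s$ with nonnegative derivative is open.
\end{itemize}
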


\begin{proof}

Let $u$, $u_s$ and $v$ be the weight vectors 
from \eqref{eq:kltransport} associated to 
$p$, $p_s$, and $q$ respectively,
which we easily see satisfy
$u_s=[u_i^{1-s}v_i^s]_{i=1}^m$. 
By Lemma \ref{lem:flow} applied to $\tilde{\psi}$, 
we see that
$\tilde{\psi}(u)\leq \tilde{\psi}(u_s)$ for all $s$ in the range. The statement then follows 
from Lemma \ref{lem:dualpsi}.
    
\end{proof}

We can now prove Theorem \ref{thm:main}:
\begin{proof}
The $c$-convexity statement is given 
in Corollary \ref{cor:cconv}. For the second, 
we first prove that 
$\RR(a)$ is homotopy equivalent to $\QQ(a)$.
In this case we have that 
$T:\RR(a)\rightarrow \QQ(a)$ is surjective
by \eqref{eq:defqq}, so that the fibers of
$T$ are nonempty. Then by the proof of Lemma \ref{lem:istransport},
they are convex and so contractible, showing that $T$
is a homotopy equivalence. By Lemma \ref{lem:path}
we see that $(1-s)p+sT(p)$ remains in $\RR(a)$, and gives a homotopy between the identity map on $\RR(a)$ and the composition of $\iota T$ where $\iota:\QQ(a)\rightarrow \RR(a)$ is the inclusion, 
showing that the inverse homotopy is 
induced by $\iota$.

To show that $\RR(a)\sim \PP(a)$, we define a deformation retraction $H: \RR(a)\times [0,1]\rightarrow \RR(a)$ from down to $\PP(a)$. Given $p\in \RR(a)$ with $q=T(p)$, let
$s'\in [0,1]$ be the smallest value with the property that
$(1-s')p+s'q\in B_{c}(q;a-b)$ where 
$b=\zeta(q)=\psi(a)+c(p,q)$, and $B_c(q;r)$ is the ball defined
in \eqref{eq:cconvdiscs}.
We then define
\begin{equation}
H(p,s)=\begin{cases}
(1-s)p+sq & s\leq s'    \\
(1-s')p+s'q & s>s'.
\end{cases}    
\end{equation}
By Lemma \ref{lem:path}, this remains in $\RR(a)$.
Since $\PP(a)$ is the union of balls \eqref{eq:cconvdiscs}, 
we find that $s'=0$ for $p\in \PP(a)$, so that $H(p,s)=p$ for all 
$s$, showing that it is a deformation retraction.

The statement that $T(\mathcal{P}(a))\simeq \mathcal{P}(a)$ follows similarly, using Lemma \ref{lem:flow} and the flow $p_s$
in place of the one in Lemma \ref{lem:path}.

\end{proof}

\subsection{Relation to the Gaussian case}

\label{sec:gaussians}

Let $\mathcal{D}=\{x_i\}_{i=1}^n\subset \mathbb{R}^d$ be a point cloud, and define $\phi(x)=\log(f(x))$ be as in 
\eqref{eq:gaussiansum}
for some coefficients $a=(a_i)_{i=1}^n$ and 
fixed scale parameter $t>0$. 
In the language of usual Legendre duality, it was shown 
in \cite{carlsson2025kernel} that $\phi$ is
$c$-convex for the cost function $c(x,y)=t\lVert x-y\rVert^2$, and that a version of 
Theorem \ref{thm:main}
was satisfied, connecting $\mathcal{X}(a)$ to a dual region $\mathcal{Y}(a)\subset \bar{\Delta}_n^+$, where 
$\bar{\Delta}^+_n=\chull(\mathcal{D})^+$ denotes the interior of 
the convex hull. 
The counterpart of the 
transport map is given by
$\Phi:\mathbb{R}^d\rightarrow \bar{\Delta}_n^+$, where
\begin{equation}
\label{eq:gausstransport}    
\Phi(x)=\sum_{i=1}^n \theta(x)_i x_i,\ \ 
\theta(x)=[\exp(-t\lVert x-x_j\rVert^2)]_{j=1}^n.
\end{equation}
The dual function $\phi^*$ can be written as 
$-\alpha$, where $\alpha$ was interpreted as the weight map in the context of alpha shapes.

The following result connects this to our KL-based potential functions.
\begin{cor}
\label{cor:gaussians}  
Let $\phi(x)=\log(f(x))$ and $\mathcal{X}(a)=\phi^{-1}[a,\infty)$ be as above. Then $\mathcal{X}(a)$
is homotopy equivalent to 
$\mathcal{P}(a)=\psi^{-1} [a,\infty)\subset \Delta^+_n$ where
\begin{equation}
\label{eq:psigaussintegral}
    \psi(p)= \log \int_{\mathbb{R}^d} 
    f(x)\exp(-\kl{p}{\theta(x)})  dx.
    \end{equation}
\end{cor}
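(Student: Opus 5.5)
The plan is to treat \eqref{eq:psigaussintegral} as a continuum version of \eqref{eq:defpsi}. The row index $i$ becomes $x\in\mathbb{R}^d$, the rows become $q_{x\bullet}=\theta(x)\in\Delta^+_n$, and the coefficients $c_i$ become the measure $f(x)\,dx$. I would then pass to the transposed data of \eqref{def:tildestoch}, where the Gaussian structure of $f$ should turn $\tilde\psi$ into $\phi$.

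\textbf{Step 1: extend Theorem \ref{thm:main} to the continuum.} I would reread the proofs of Lemma \ref{lem:istransport}, Corollary \ref{cor:cconv} and Lemmas \ref{lem:flow}, \ref{lem:dualpsi}, \ref{lem:path} with every sum $\sum_i c_i(\cdot)$ replaced by $\int f(x)(\cdot)\,dx$. The weight vector $u$ becomes the probability density $u_p(x)\propto f(x)\exp(-\kl{p}{\theta(x)})$. The transport map becomes $T(p)=[\exp\int u_p(x)\log\theta(x)_j\,dx]_j$. The only inputs are the first and second derivative computations and the Cauchy--Schwarz step. These still hold, since all integrands have Gaussian decay and can be differentiated under the integral sign. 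This gives $\PP(a)\simeq\QQ(a)\simeq\RR(a)$ for the continuum $\psi$.

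\textbf{Step 2: compute the transposed data.} By \eqref{def:tildestoch}, the transposed rows are $\tilde q_j(x)=f(x)\theta(x)_j/\tilde c_j$, indexed by $j$ and viewed as densities on $\mathbb{R}^d$, with $\tilde c_j=\int f\theta_j\,dx$. The dual potential $\tilde\psi$ lives on densities $u$ on $\mathbb{R}^d$. Lemma \ref{lem:dualpsi} then identifies $\RR(a)$ with the set of $p$ whose weight density satisfies $\tilde\psi(u_p)\ge a$.

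\textbf{Step 3: reduce $\tilde\psi$ to $\phi$.} I would use the expansion
\[
\kl{p}{\theta(x)}=\sum_j p_j\log p_j+t\lVert x-m(p)\rVert^2+t\,\mathrm{Var}(p)+\log Z(x),
\]
where $m(p)=\sum_j p_jx_j$, $\mathrm{Var}(p)=\sum_j p_j\lVert x_j-m(p)\rVert^2$ and $Z(x)=\sum_k e^{-t\lVert x-x_k\rVert^2}$. Consequently $u_p$ is $f/Z$ times the Gaussian centered at $m(p)$. I would then restrict $\tilde\psi$ to the family $\mu_y=u$ of Gaussian-type densities centered at $y\in\mathbb{R}^d$. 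Using that the KL divergence between equal-scale Gaussians is $t\lVert x-y\rVert^2$, the aim is to show $\tilde\psi(\mu_y)=\phi(y)+\mathrm{const}$. This is exactly the identity $\psi(\theta(x))=\phi(x)$ from the introduction, now on the transposed side.

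Finally, I would show that the super-level sets of $\tilde\psi$ on the weight densities deformation retract onto this Gaussian slice. The tool is the convex-combination flows of Lemma \ref{lem:path} and Lemma \ref{lem:flow}, which increase $\tilde\psi$; the fibres of $p\mapsto m(p)$ are convex, which keeps the retraction inside the super-level set. Combining Steps 1--3 gives $\mathcal{X}(a)\simeq\RR(a)\simeq\PP(a)$, with the constant absorbed into a normalisation of the coefficients or into the level $a$.

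The main obstacle is Step 3. Unless the $a_i$ are chosen compatibly, the factor $f/Z$ is not constant, so the weight densities are not exactly Gaussian. It is then not clear that the Gaussian slice is preserved by the dual transport $\tilde T$, or that the additive constant is independent of $y$. The first thing I would try is to choose the normalisation so that $f(x)\exp(-\log Z(x))$ cancels against the coefficients $\tilde c_j$. Failing that, I would replace the slice by the image of $p\mapsto u_p$ and compare it with $\mathcal{X}(a)$ through the map $x\mapsto\theta(x)$ and the Gaussian transport \eqref{eq:gausstransport}.
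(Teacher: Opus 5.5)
Steps 1 and 2 are sound, and routing the argument through $\RR(a)$ and Lemma \ref{lem:dualpsi} is a genuinely different and workable strategy. As written, however, Step 3 — the only step that connects $\psi$ to $\phi$ — is left open, and the patches you suggest would not close it.

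\textbf{The normalisation fix cannot work.} The factor $h=f/Z$ is a nonconstant function of $x$. It sits inside both $u_p$ and the transposed rows $\tilde q_j=h\,e^{-t\lVert x-x_j\rVert^2}/\tilde c_j$, so no choice of the constants $\tilde c_j$ can cancel it. With the unweighted $\theta$ of \eqref{eq:gausstransport} and unequal $a_j$, $\tilde\psi(u_p)$ is genuinely not $\phi(m(p))$ plus a constant.

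\textbf{The missing idea is to put the weights into $\theta$.} Take $\theta(x)=[a_je^{-t\lVert x-x_j\rVert^2}]_{j=1}^n$, whose normalizer is $f(x)$ itself, so that $h\equiv 1$. This is also the $\theta$ for which $\Phi(x)=\sum_j\theta(x)_jx_j$ satisfies $\nabla\phi=2t(\Phi-x)$, and the setting in which the identities of Lemma \ref{lem:conjugategaussians} hold. Your expansion becomes $\kl{p}{\theta(x)}=\sum_jp_j\log(p_j/a_j)+t\lVert x-m(p)\rVert^2+t\,\mathrm{Var}(p)+\log f(x)$.

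It follows that $u_p=g_{m(p)}$ exactly, where $g_y$ is the Gaussian density centered at $y$ with covariance $(2t)^{-1}$ times the identity. Likewise $\tilde q_j=g_{x_j}$ and $\tilde c_j=a_j(\pi/t)^{d/2}$. Hence $\tilde\psi(u_p)=\phi(m(p))+\tfrac d2\log(\pi/t)$. Lemma \ref{lem:dualpsi} then gives $\RR(a)=m^{-1}\bigl(\{y:\phi(y)\ge a'\}\bigr)$ with $a'=a-\tfrac d2\log(\pi/t)$. This shift is the constant you anticipated, and it is removed by normalizing $dx$.

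\textbf{The final retraction is aimed at the wrong target.} Once $u_p=g_{m(p)}$, the weight densities already are the Gaussian slice, so there is nothing to retract. The flows of Lemmas \ref{lem:flow} and \ref{lem:path} move $p$ toward $T(p)$; they are what Step 1 already uses and play no role here. What you actually need is the following.
\begin{enumerate}
\item The affine map $m$ sends $\psimp$ onto $\mathrm{relint}\,\chull(\mathcal{D})$ with nonempty convex fibers, and $\RR(a)$ is a union of whole fibers. Hence $\RR(a)\simeq\{y\in\mathrm{relint}\,\chull(\mathcal{D}):\phi(y)\ge a'\}$.
\item Your plan omits this entirely: the set in item 1 is only the part of $\mathcal{X}(a')$ inside the hull. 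Close the gap with the homotopy $x\mapsto(1-s)x+s\Phi(x)$. Since $\phi+t\lVert x\rVert^2$ is convex with gradient $2t\Phi(x)$, one gets $\phi((1-s)x+s\Phi(x))-\phi(x)\ge t(2s-s^2)\lVert\Phi(x)-x\rVert^2\ge 0$. The path stays in the relative interior of the hull when it starts there, and $\Phi$ always lands there. So $\Phi$ is a homotopy inverse to the inclusion.
\end{enumerate}

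\textbf{Comparison with the paper.} With these repairs your proof is self-contained and differs from the paper's, which never uses $\tilde\psi$ here. The paper takes $\mathcal{Z}(a)=\theta^{-1}\PP(a)$ and imports $\mathcal{Z}(a)\simeq\mathcal{X}(a)$ from \cite{carlsson2025kernel}. It gets $\PP(a)\simeq T(\PP(a))$ from Theorem \ref{thm:main}. It then uses Lemma \ref{lem:maxpsi} and the square $T\theta=\theta\Phi$ to show that $T\theta:\mathcal{Z}(a)\to T(\PP(a))$ is surjective with contractible fibers. Your version trades Lemma \ref{lem:maxpsi} and the imported equivalence for the self-duality of Lemma \ref{lem:dualpsi} plus elementary convexity.
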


We start with two lemmas, the first being
an analog of Lemma \ref{lem:dualpsi}.
\begin{lemma}
\label{lem:conjugategaussians}    
The transport map for $\eqref{eq:psigaussintegral}$ is given by
\begin{equation}
\label{eq:transportgaussian}
T(p)=\theta(\Phi'(p)),\ \ \Phi'(p)=p_1x_1+\cdots+p_nx_n,
\end{equation}
and we have a relation
\begin{equation}
\label{eq:conjgauss}    
\phi^*(\Phi(x))=\phi(x)+t\lVert x-y\rVert^2=\psi(\theta(x)),\ \ T(\theta(x))=\theta(\Phi(x)).
\end{equation}
\end{lemma}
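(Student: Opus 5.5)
The plan is to treat \eqref{eq:psigaussintegral} as the continuous analog of \eqref{eq:defpsi}. The rows $q_{i\bullet}$ are replaced by the family $\theta(x)\in\Delta_n^+$, and the weights $c_i$ by the measure $f(x)\,dx$. Every computation then reduces to a Gaussian integral in $x$.

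\emph{Step 1: transfer the earlier results.} First I will check that the proofs of Lemma \ref{lem:istransport} and Corollary \ref{cor:cconv} go through verbatim with sums over $k$ replaced by integrals over $x$. The reason is that the gradient, Hessian and Cauchy--Schwarz arguments only use that the weights $u$ form a probability measure. This gives $c$-convexity and identifies the unique transport map as
\[
T(p)=\Big[\exp\Big(\textstyle\int u_p(x)\log\theta(x)_j\,dx\Big)\Big]_{j=1}^n,\qquad u_p(x)\propto f(x)\exp(-\kl{p}{\theta(x)}).
\]
Exchanging differentiation and integration is justified by the Gaussian decay of the integrands.

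\emph{Step 2: make $u_p$ explicit.} Write $Z(x)=\sum_j\exp(-t\lVert x-x_j\rVert^2)$, so that $\log\theta(x)_j=-t\lVert x-x_j\rVert^2-\log Z(x)$. Using $\sum_j p_j=1$ and expanding squares, I get
\[
-\kl{p}{\theta(x)}=H(p)-t\lVert x-\Phi'(p)\rVert^2-t\Big(\textstyle\sum_j p_j\lVert x_j\rVert^2-\lVert\Phi'(p)\rVert^2\Big)-\log Z(x),
\]
where $H$ is the entropy. Hence $f(x)e^{-\kl{p}{\theta(x)}}$ equals $f(x)/Z(x)$ times a Gaussian centered at $\Phi'(p)$, times a factor independent of $x$.

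The main obstacle I expect is the matching of $f$ with $Z$. When the coefficients $a_i$ all equal $1$, the ratio $f/Z$ is identically $1$. For general $a_i$, I plan to absorb the weights into $\theta$, replacing $\theta(x)_j$ by $[a_j e^{-t\lVert x-x_j\rVert^2}]$, which only shifts $\log\theta_j$ by the $x$-independent term $\log a_j$. Alternatively I would note that such a term factors out of every expression below. I will check carefully which normalization makes \eqref{eq:conjgauss} hold exactly, and track the additive constant $\tfrac d2\log(\pi/t)$ coming from the Gaussian integral. This constant either vanishes under the intended normalization of $dx$ or shifts all levels $a$ uniformly.

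\emph{Step 3: compute $T(p)$.} Once $u_p$ is the Gaussian with mean $\Phi'(p)$, I compute
\[
\int u_p(x)\log\theta(x)_j\,dx=-t\lVert\Phi'(p)-x_j\rVert^2+C,
\]
with $C$ independent of $j$. The variance term is common to all $j$, and the $\log Z$ term does not depend on $j$. Normalizing gives $T(p)=\theta(\Phi'(p))$, which is \eqref{eq:transportgaussian}. Since $\Phi'(\theta(x))=\sum_j\theta(x)_jx_j=\Phi(x)$, this immediately yields $T(\theta(x))=\theta(\Phi(x))$.

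\emph{Step 4: the value identity.} Evaluate $\psi(\theta(x))$ by the same Gaussian integral. This gives
\[
\psi(\theta(x))=H(\theta(x))-t\textstyle\sum_j\theta(x)_j\lVert x_j\rVert^2+t\lVert\Phi(x)\rVert^2
\]
up to the constant discussed in Step 2. Substituting $H(\theta(x))=t\sum_j\theta(x)_j\lVert x-x_j\rVert^2+\log Z(x)$ and expanding, this collapses to $\log Z(x)+t\lVert x-\Phi(x)\rVert^2=\phi(x)+t\lVert x-\Phi(x)\rVert^2$ (with $y=\Phi(x)$ in \eqref{eq:conjgauss}). Finally, the Legendre-duality result of \cite{carlsson2025kernel} identifies $\phi(x)+t\lVert x-\Phi(x)\rVert^2$ with $\phi^*(\Phi(x))$, via the transport condition \eqref{eq:transportcondition} for $\Phi$. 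That completes \eqref{eq:conjgauss}.
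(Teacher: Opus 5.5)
Your proposal is correct. For the transport map it is the same computation the paper sketches: the continuum analog of \eqref{eq:kltransport}, $T(p)_j\propto\exp\bigl(\int u_p(x)\log\theta(x)_j\,dx\bigr)$, evaluated by a Gaussian integral. You are more careful than the paper's one-line sketch here. You keep the normalizer $\log Z(x)$ of $\theta$ and observe that it cancels $f$, and that cancellation is exactly why $u_p$ is a single Gaussian centered at $\Phi'(p)$.

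Where you genuinely differ is the value identity $\psi(\theta(x))=\phi(x)+t\lVert x-\Phi(x)\rVert^2$. The paper obtains it, together with $T(\theta(x))=\theta(\Phi(x))$, as a limit of Lemma \ref{lem:dualpsi}. In that argument $\theta(x)$ plays the role of the weight vector, so the dual potential evaluated at the Gaussian density centered at $x$ is $\phi(x)$ up to a constant. You instead compute $\psi(p)=H(p)-t\bigl(\sum_jp_j\lVert x_j\rVert^2-\lVert\Phi'(p)\rVert^2\bigr)+\mathrm{const}$ in closed form and expand the entropy at $p=\theta(x)$. The paper's route is more conceptual: it shows the identity as an instance of the same duality that drives the homotopy argument. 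However, it needs an infinite-dimensional version of Lemma \ref{lem:dualpsi} that is only justified heuristically. Your route is elementary and self-contained, and it makes the hidden normalizations visible.

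Both caveats you raise are genuine, and the paper glosses over them.
\begin{enumerate}
\item The identity holds only up to the additive constant $\tfrac d2\log(\pi/t)$ unless $dx$ is normalized.
\item It requires either equal $a_i$ or an $a$-weighted $\theta$.
\end{enumerate}
Commit to the weighted-$\theta$ fix. If your ``alternative'' means keeping the unweighted $\theta$ with unequal $a_i$, it fails. In that case $f/Z=\sum_ia_i\theta(x)_i$ depends on $x$, and your own computation gives $T(p)=\theta(m_p)$ with $m_p=\int x\,u_p(x)\,dx$. This differs from $\Phi'(p)$ in general, already in $d=1$ with $x_1=-1$, $x_2=1$, $p=(\tfrac12,\tfrac12)$ and $a_1\neq a_2$. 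Moreover, the unweighted $\Phi$ is then not the transport map of $\phi$.

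With $\theta_a(x)=[a_je^{-t\lVert x-x_j\rVert^2}]_{j=1}^n$ and $\Phi$ the corresponding $a$-weighted barycenter map, every step of your argument goes through verbatim. The extra term $\sum_jp_j\log a_j$ in $\psi$ cancels against the entropy of $\theta_a(x)$.
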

\begin{proof}
These relations can be checked directly, by comparing gradient vectors as in Lemma \ref{lem:istransport}, or by taking the 
limit of transport maps by writing \eqref{eq:transportgaussian} as a Riemann sum. In the second case,
we would calculate
$T(p)=[ \exp(- r_j)]_{j=1}^n$ where
\[r_j=-\int f(x) \exp(-\kl{p}{\theta(x)}) \log(\theta(x)_j) dx.\]
We then obtain \eqref{eq:transportgaussian}
by inserting $\kl{q}{\theta(x)} = t\lVert z-\Phi'(p)\rVert^2+c$, which has the effect of shifting the centers of the Gaussians defining $f(x)$ towards the mean. The rest can be deduced by taking the limit of Lemma \ref{lem:dualpsi}, realizing that $u=\theta(x)$ plays the role of the weights in \eqref{eq:kltransport}.    
\end{proof}

\begin{lemma}
\label{lem:maxpsi}    
Let $\psi$ and $T$ be as in \eqref{eq:psigaussintegral} and \eqref{eq:transportgaussian}, and suppose that $q$ is in the image
of $T$. 
Then there exists a point $x\in \mathbb{R}^d$
such that $T(\theta(x))=q$, and
$\psi(\theta(x))\geq \psi(p)$ for all 
$p \in T^{-1}(\{q\})$.
\end{lemma}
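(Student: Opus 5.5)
The plan is to use the structure of the fiber $T^{-1}(\{q\})$ given by \eqref{eq:transportgaussian}: since $T(p)=\theta(\Phi'(p))$ and $\theta$ is injective on the relevant set, we have $T(p)=q$ if and only if $\Phi'(p)=y$ for the unique point $y$ with $\theta(y)=q$. Injectivity holds as long as the $x_i$ affinely span $\mathbb{R}^d$; otherwise we restrict to their affine hull. So the fiber is the affine slice
\[
F_y=\{p\in\Delta_n^+ : p_1x_1+\cdots+p_nx_n=y\},
\]
which is convex. We then need a point of the form $\theta(x)$ in $F_y$ that maximizes $\psi$ on $F_y$. By Lemma \ref{lem:conjugategaussians}, $T(\theta(x))=\theta(\Phi(x))$. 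So the candidate is any $x$ with $\Phi(x)=y$, and its existence follows from $q$ lying in the image of $T$. Since $\Phi'$ maps $\Delta_n^+$ onto the interior of the convex hull, this reduces to the surjectivity of the Gaussian transport map $\Phi$ onto $\bar{\Delta}_n^+$, which I would quote from \cite{carlsson2025kernel}.

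For the maximization, I would use Lemma \ref{lem:istransport} in its limiting form, that is, the $c$-convexity of \eqref{eq:psigaussintegral}. For every $p\in F_y$ we have $T(p)=q$, so by the equality case of the conjugate,
\[
\psi(p)+\kl{p}{q}=\psi^c(q),
\]
and the right-hand side is constant on $F_y$. Hence $\psi(p)=\psi^c(q)-\kl{p}{q}$, and maximizing $\psi$ over the fiber is the same as minimizing $\kl{p}{q}$ over $p\in F_y$. This is an I-projection of $q$ onto a linear family. By the standard exponential-family characterization, the minimizer has the form $p_j\propto q_j\exp(\langle \lambda,x_j\rangle)$ for a Lagrange multiplier $\lambda\in\mathbb{R}^d$.

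Now $q=\theta(y)$ has $q_j\propto \exp(-t\lVert y-x_j\rVert^2)$, where I use $\theta$ as the weight map from \eqref{eq:gausstransport}, which also absorbs the coefficients $a_j$. Completing the square gives
\[
q_j\exp(\langle\lambda,x_j\rangle)\propto \exp\!\left(-t\lVert y+\lambda/(2t)-x_j\rVert^2\right),
\]
so the minimizer equals $\theta(x)$ with $x=y+\lambda/(2t)$. The constraint $p\in F_y$ then says exactly that $\Phi(x)=y$, and therefore $T(\theta(x))=q$. The existence of the minimizer uses that the fiber is nonempty and that KL is strictly convex and coercive toward the boundary of the simplex relative to the slice, and the Lagrange conditions apply because the minimizer is interior.

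The main obstacle I expect is justifying the limiting identities: that $\psi$ in \eqref{eq:psigaussintegral} satisfies the equality $\psi(p)+\kl{p}{T(p)}=\psi^c(T(p))$ uniformly along a fiber. Lemma \ref{lem:istransport} was only proved for finite sums. I would handle this by checking directly that the convexity and gradient computation in that lemma carries over to integrals, with weights $u$ replaced by a probability density in $x$. This avoids any need to pass to a limit of Riemann sums.
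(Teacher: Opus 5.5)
Your proposal is correct and is essentially the paper's argument. The fiber is the affine slice $\{p:\sum_j p_jx_j=y\}$, constancy of $\psi(p)+\kl{p}{q}$ there turns the problem into an I-projection of $q=\theta(y)$, and the exponential-family form $p_j\propto q_je^{\langle\lambda,x_j\rangle}$ of that projection is again of the form $\theta(x)$; your explicit $x=y+\lambda/(2t)$ replaces the paper's appeal to uniqueness of $x$ with $\Phi(x)=y$ and makes the citation of surjectivity of $\Phi$ unnecessary.

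One small correction: for $q\in\Delta_n^+$, $\kl{p}{q}$ stays bounded as $p$ approaches the boundary, so it is not coercive. The minimizer over the compact closed slice is nonetheless interior, because $\partial_{p_j}\kl{p}{q}=\log(p_j/q_j)+1\to-\infty$ as $p_j\to0$, so moving toward any point of the nonempty fiber in $\Delta_n^+$ strictly decreases the divergence.
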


\begin{proof}

We may assume without loss of generality that the affine
span of $\mathcal{D}$ is all of $\mathbb{R}^d$, so that they do not lie in a proper affine subspace, which in particular means that
$\theta$ is injective.

Suppose $q$ is in the image of $T$, say $q=T(p)$.
Then Lemma \ref{lem:conjugategaussians} says we have a 
commuting square
\begin{equation}
\label{eq:commutingtransport}
\begin{tikzcd}
\mathbb{R}^d \arrow[r,"\theta"] \arrow[d,"\Phi"']
  & \Delta_n^+ \arrow[d,"T"] \arrow[dl,"\Phi'"] \\
\bar{\Delta}_n^+ \arrow[r,"\theta"']
  & \Delta_n^+
\end{tikzcd}
\end{equation}
which shows that $q=\theta(y)$ for some $y\in \mathbb{R}^d$.
It was shown in \cite{carlsson2025kernel} that
every point $y \in \chull(\mathcal{D})$ is given by
$y=\Phi(x)$, so that $q=\theta(\Phi(x))=T(\theta(x))$.
Moreover, $x$ is unique assuming the affine spanning condition
of $\mathcal{D}$.

We will show that $\psi$ is maximized at $p=\theta(x)$
over all points in $T^{-1}(\{q\})$. Since
$\psi(p)+\kl{p}{q}$ is constant on that set, this amounts
to saying that $\kl{p}{q}$ is minimized at that value.

The fiber $T^{-1}(\{q\})$ can be described by the
condition that $Ap=y$ where
$A$ is the $d\times n$ matrix
whose $i$th column is $x_i$.
For any $q$, the minimizer of the
function $\kl{p}{q}$ subject to this constraint is given by
\begin{equation}
\label{eq:infoproj}    
p=[q_i \exp(-(A^t\lambda(q))_i)]_{i=1}^n
\end{equation}
for some vector $\lambda(q)\in \mathbb{R}^d$. 
If $q=\theta(y)$, then the form of \eqref{eq:infoproj} shows that 
$p$ is in the image of $\theta$, and therefore $p=\theta(x)$,
since $x$ is the unique point with $\Phi(x)=\Phi'(p)$.

\end{proof}

We can now prove the corollary.

\begin{proof}

By Lemma \ref{lem:conjugategaussians} 
we have $\phi(x)+t\lVert x-\Phi(x)\rVert^2=\psi(\theta(x))$, so that
$\mathcal{Z}(a)=\theta^{-1} (\mathcal{P}(a))$ where $\mathcal{Z}(a)$ is the super-level set of the left hand side. In the proof of the main result in \cite{carlsson2025kernel}, the role of 
$\mathcal{Z}(a)$
parallels that of $\mathcal{R}(a)$ in the present paper,
and it was shown that $\mathcal{Z}(a) \simeq \mathcal{X}(a)$.
It remains to show that
$\theta:{\mathcal{Z}(a)}\rightarrow \mathcal{P}(a)$ is a 
homotopy equivalence. The statement of Theorem \ref{thm:main}
applies to $T$, since it has the same form with a convergent 
integral in place of a finite sum, so we find that
it induces an equivalence 
$\mathcal{P}(a) \simeq T(\mathcal{P}(a))$
by item \ref{item:homotopy}.

We then must show that $T\theta : \mathcal{Z}(a)\rightarrow T(\mathcal{P}(a))$ is an equivalence.
Lemma \ref{lem:maxpsi} shows that this map is surjective,
so it suffices to check that it has contractible fibers.
For this we use the other sides of \eqref{eq:commutingtransport}.
Another result in \cite{carlsson2025kernel} shows that the fibers
of $\Phi$ are contractible, and this is also true for $\theta$
(in fact, both fibers are a point when the affine span of
$\mathcal{D}$ is $\mathbb{R}^d$).

\end{proof}

\begin{figure}
    \centering
\begin{subfigure}[b]{.32\linewidth}
\fbox{\includegraphics[scale=.13]{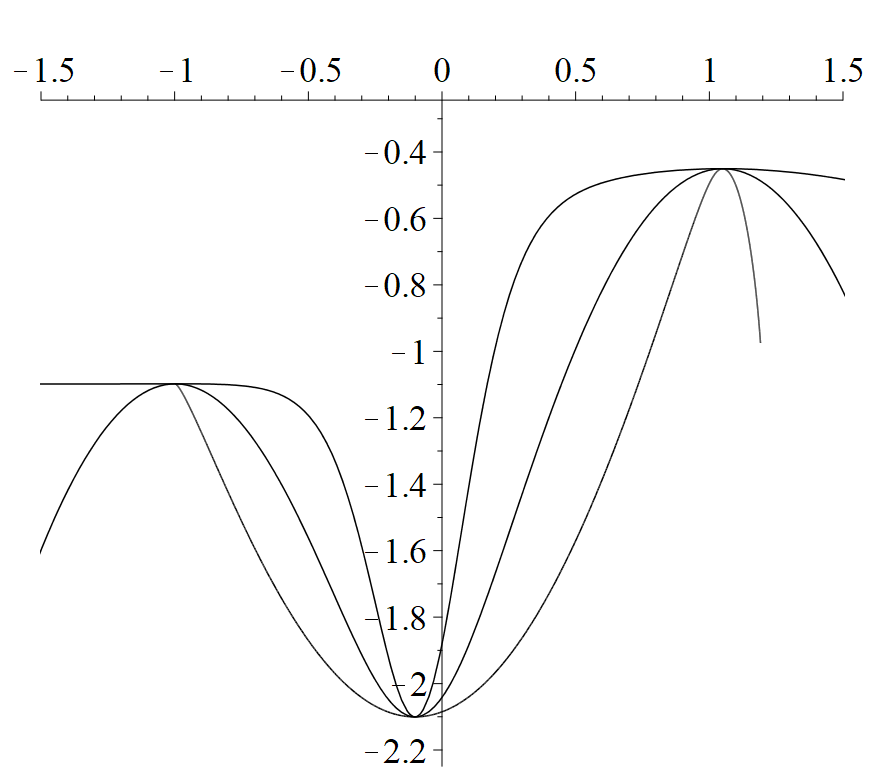}}
\end{subfigure}
\begin{subfigure}[b]{.32\linewidth}
\fbox{\includegraphics[scale=.13]{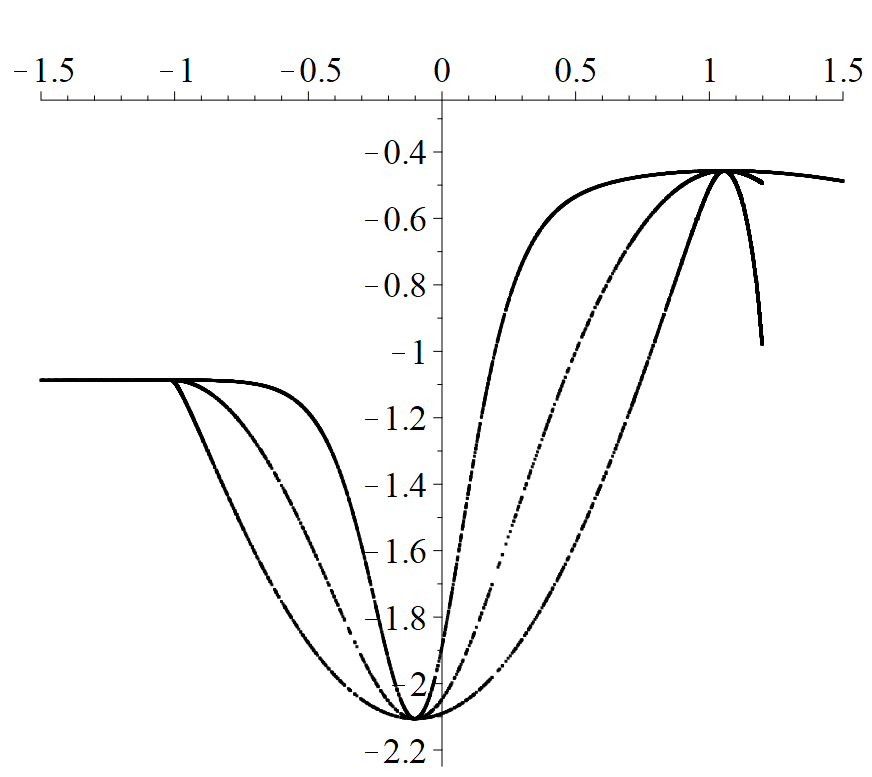}}
\end{subfigure}
\begin{subfigure}[b]{.32\linewidth}
\fbox{\includegraphics[scale=.216]{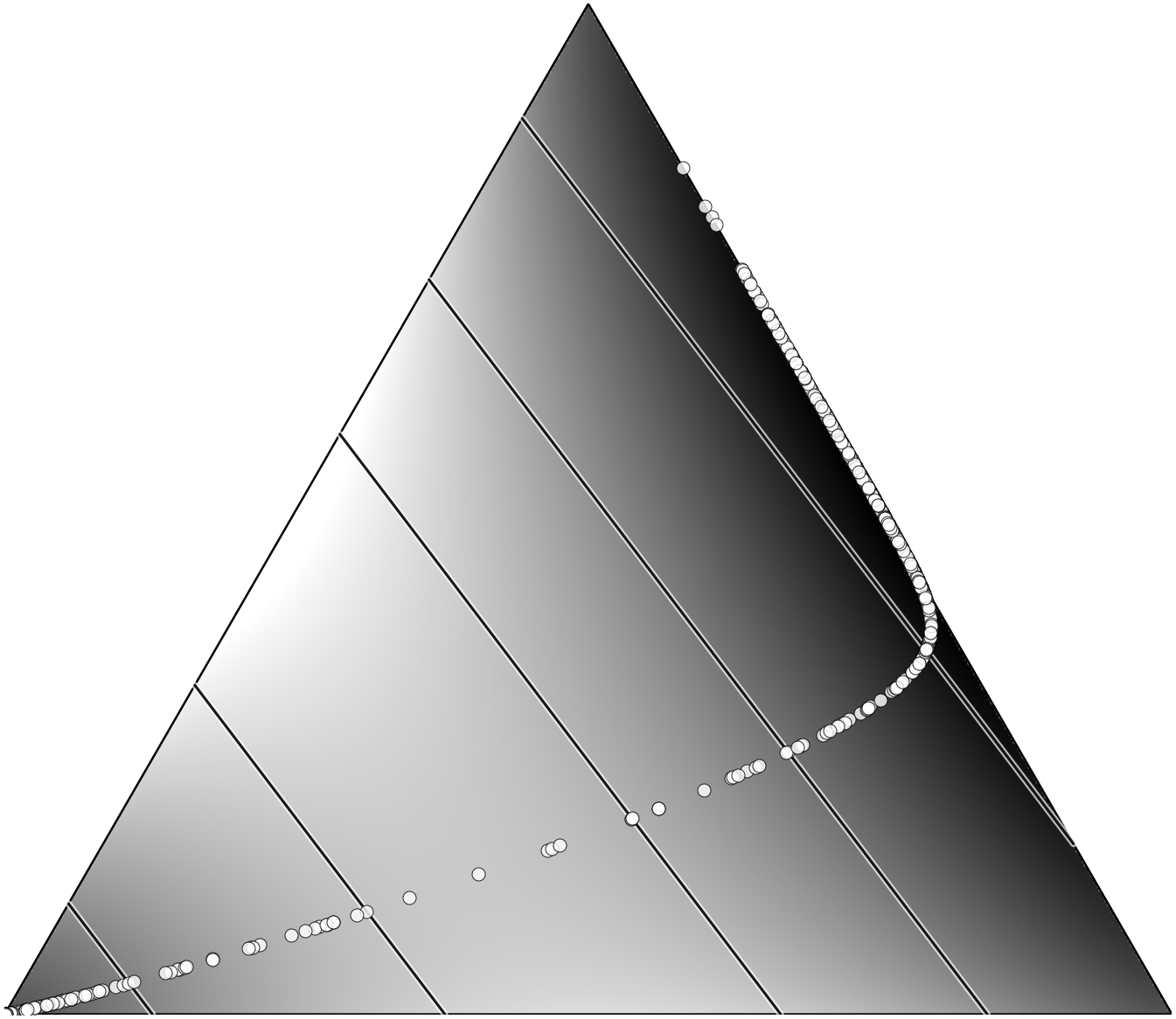}}
\end{subfigure}
    \caption{Left: The functions 
    $\phi^*(p(x))$, $\phi(x)$, and $\phi^*$ associated to the points $\{-1,.9,1.2\}\in \mathbb{R}^1$ from the Gaussian case with $a_i=1/3$ and $t=2.0$. Middle: many pairs of the forms $(x,\psi(\theta(x)))$,
    $(\Phi(x),\zeta(T(\theta(x))))$, and $(\Phi(x),\psi(\theta(x)))$ representing samples from the three spaces from item \ref{item:homotopy} of Theorem \ref{thm:main} under $\theta$. Right: The images of samples under $\theta$ in 
    $\Delta^+_3$, with $\psi$ values represented by grayscale intensity.}
    \label{fig:gaussians}
\end{figure}

These statements are illustrated in the one-dimensional case
in Figure \ref{fig:gaussians}. The top curves in the two left
frames are represented by 
$\phi^*(\Phi(x))=\phi(x)+t\lVert x-\Phi(x)\rVert^*$ and $\psi(\theta(x))$ represented by samples, which agree by
Lemma \ref{lem:conjugategaussians}. The lower curves also agree,
as they represent the graph of the conjugate function $\phi^*$
and the points $(\Phi(x),\psi(\theta(x)))$ which similarly correspond to each other. The middle functions are the graphs of
$\phi$ and the dual function $\psi^c$ 
under $\theta$, which
are not equal. 
On the right, we have the image of many points
under $\theta$ in $\Delta_3^+$, where the dark lines represent the
fibers $T^{-1}(\{q\})$. Lemma \ref{lem:maxpsi} says that the maximum value of $\psi$ (shown as darker grayscale values)
along those fibers occurs at those points in
the image of $\theta$.

\section{Practical implementations}
\label{sec:implementations}

We describe a method for applying this
construction to point clouds which are not well suited to Euclidean distances, 
by creating a random walk whose states 
are points based on stochastic neighbors
\cite{hinton2008tsne}.
We then pass this matrix to a
strictly positive continuous-time transition matrix $Q=P(t)$, which is stored using a low-rank form of its entries in negative-log coordinates. We use the transport construction to generate a condensed point
cloud in $\Delta_n^+$, referred to as the \emph{transport shape};
our implementation is called \textsc{AlphaKL}, in reference to the
alpha-shape analogy of Section \ref{sec:transport}.

\subsection{Stochastic neighbors random walks}

\label{sec:markov}

Starting with a point cloud
$\mathcal{D}=\{x_1,\ldots,x_n\}\subset\mathbb{R}^d$, we first generate a Markov chain using stochastic neighbors, which is the first step of the $t$-SNE
algorithm \cite{hinton2008tsne} defined
as follows. Given a vector of scales
$t=(t_i)_{i=1}^n$, let
$P=(p_{ij})$ be the transition matrix of a 
random walk whose $i$th row is
\begin{equation}
p_{i\bullet}=
\bigl[\exp(-t_i\lVert x_i-x_j\rVert^2)\bigr]_{j=1}^n,
\end{equation}
after normalizing the row to sum to one. 
We will also use the convention
of setting the diagonal entry $p_{ii}$ to
zero before normalization.
Given an entropy value $H>0$,
we then select the unique scale $t_i$ which arranges
that $p_{i\bullet}$ has entropy $H$. 
A typical value in our experiments is $H=2$,
and for efficiency we retain only
the largest $\lceil3e^H\rceil$ off-diagonal entries in each row and normalize
again.  Although squared Euclidean distance is displayed above, the same
construction can be done with respect to any
squared distance.

\subsection{Matrix exponentials}

\label{sec:matrixexps}

By exponentiating this stochastic-neighbor matrix, we determine a stochastic matrix $Q=P(t)$,
whose rows are discretized kernel functions
which move around the shape of a point
cloud. 
When the graph of nonzero entries of 
$P$ is strongly 
connected, the resulting matrix 
$P(t)$ is positive stochastic.
Otherwise, we may consider its strongly connected components.
We then define $\psi$ by \eqref{eq:defpsi} using the coefficients
$c_i=1$, though for other Markov chains
it may make sense to define $c_i=\pi_i$, where
$\pi$ is the stationary distribution.

For efficiency, one might expect to compute the matrix exponential by taking a possibly truncated spectral decomposition of $P$.  The problem with
this is that truncated SVD approximations can
introduce small negative entries into $P$ or its powers, which results in 
the cost vector determining the 
transport map becoming infinite.
Instead, we compute the matrix
exponential in logarithmic coordinates.  Rather than storing only
$Q=(q_{ij})$, we store
\begin{equation}
R=(r_{ij}),\ \ r_{ij}=-\log(q_{ij}).
\end{equation}
To compute this matrix directly, we apply the formula
\begin{equation}
P(t)=\exp(-t(\Id-P))=\exp(-t)\exp(tP),
\end{equation}
and observe that every term in the power series for $\exp(tP)$ is nonnegative.  
We may therefore evaluate
the power series 
without ever introducing negative coordinates,
thereby preserving longer range coefficients which
would otherwise be rounded to zero.

One issue with computing the power series directly is that
the complexity grows with $t$, as more terms are required 
to get close to convergence.
We have found in our code that 
exponential matrices of size a few thousand can
be comfortably computed in this way in a few seconds or minutes, but for sizes
$n>10000$, a
more sophisticated method should be developed.
In the example of Section \ref{sec:tetrahedra}
in which $n=45000$, we instead deal with this using
the coarse graining procedure of Section \ref{sec:coarse_graining}.

\subsection{Matrix factorizations}

\label{sec:factorizations}

For more than a few thousand states, it also
becomes impractical to store $R$
densely or to evaluate $\psi$ and $T$ using dense linear algebra. We deal with this by replacing $R$ 
by a rank-$r$ gauged factorization
\begin{equation}
R\sim R'=A\Sigma B+\mu{\bf 1}^t,
\end{equation}
where $A,\Sigma,B$ have dimensions $n\times r$, $r\times r$, and
$r\times n$, respectively, and ${\mathbf 1}$ is the vector of all ones. The column
vector $\mu$ is a gauge correction chosen so that the entrywise
exponential $Q'=(\exp(-r_{ij}'))$ is stochastic. This is efficient
because the main step in computing the cost vector
$(\kl{p}{q_{i\bullet}})_{i=1}^n$, and therefore the potential and
transport map, is multiplying $R$ on the right by $p$, regarded as a
column vector. With the factorization above, the factors can simply be
multiplied in order.

The factorization is obtained by doubly centering $R$, as in
multidimensional scaling (MDS), applying a truncated singular value
decomposition to the centered matrix, and then correcting the row gauge
$\mu$ so that $Q'$ is stochastic as indicated above. 
In fact, when
$Q$ is given by Gaussian kernels as in Section \ref{sec:gaussians},
this construction precisely recovers classical MDS: the centered
negative logarithms differ from squared Euclidean distances only by row
and column gauge terms, so the SVD recovers the original low-dimensional
coordinate factors.
For very large matrices, we have found that a randomized SVD is effective
\cite{halkl2011randomized}, although we did not use this in our examples.

This factorization procedure also determines
a useful Euclidean embedding method: the leading
columns of $A\Sigma^{1/2}$ give coordinates for the rows of $Q$, in the same
way that leading principal components give visualization coordinates in
PCA. This is used to create low-dimensional embeddings for visualization as well as higher-dimensional embeddings 
to generate alpha complexes as explained in Section
\ref{sec:alphacomplex}.

\subsection{Sampling using the transport map}

\label{sec:sampling}

We have defined $\psi$ in Section \ref{sec:matrixexps},
and therefore the transport map and dual function
$T,\zeta$.
There are now two possible definitions of the shape,
which are the filtered families $\mathcal{Q}(a)$, or
$T(\mathcal{P}(a))$.
Theorem \ref{thm:main} shows that both are homotopy equivalent, though Lemma 
\ref{lem:conjugategaussians}
shows that $T(\mathcal{P}(a))$ may be the correct choice when $Q$ is derived
from a kernel function on a point cloud.
We present methods for
sampling from both.

In either case,
the resulting collection of points will be recorded as the rows of an $M\times n$ matrix denoted $\tilde{Q}=(q_{i\bullet})_{i=1}^M$, together with a vector $b=(b_i)_{i=1}^M$ of corresponding filtration values. To sample from $\mathcal{Q}(a)$, 
we would need to 
generate sampled points 
$\{\tilde{p}_i\}_{i=1}^M\in \Delta^+_n$,
and let 
$\tilde{q}_{i}=T(\tilde{p}_i)$, and
$b_i=\zeta(T(\tilde{p}_i))$.
To sample from $T(\mathcal{P}(a))$,
we would instead take $b_i=\psi(\tilde{p}_i)$, with the same choice of $\tilde{q}_i$.

For both spaces we are required to
generate the input points
$\tilde{p}_i\in\Delta_n^+$. We will use two methods for this:
\\
\noindent {\bf Method A.}
A simple possibility is to simply let $\tilde{p}_i=q_{i\bullet}$ be the $i$th row of $Q$.
This choice always
produces exactly $M=n$ transported points and can 
leave large portions of the
shape unexplored when the original data set is small,
as is the case COIL example below,
which has only $72$ images.
\\
\noindent {\bf Method B}.
The second method
involves extending the sampling method of \cite{carlsson2025kernel}.
For this we first use the left logarithmic MDS coordinates from Section \ref{sec:factorizations} to convert
the rows of $Q$ into a point cloud 
$\mathcal{D}=\{x_i\}_{i=1}^n
\subset \mathbb{R}^r$, where $r$ is the rank of the
factorization. We then sample from the corresponding
underlying distribution of $f(x)$ as in Section 
\ref{sec:gaussians} using the scale parameter $t=1$
to obtain a new point cloud $\tilde{\mathcal{D}}=\{\tilde{x}_i\}_{i=1}^M$ for $M$ large enough, and define the $i$th
sampled distribution by $\tilde{p}_i=\theta(\tilde{x}_i)$, 
where $\theta$ is defined 
as in \eqref{eq:gausstransport}.

\subsection{Euclidean embeddings and alpha complexes}
\label{sec:alphacomplex}

Given a matrix of samples $\tilde{Q}$, we
create a Euclidean embedding by applying the MDS method
from Section \ref{sec:factorizations} to the
matrix $\tilde{R}$ associated to $\tilde{Q}$ in logarithmic
coordinates, noting that the answer may not be square.
Taking the first two or three coordinates produces
an immediate low-dimensional embedding of the points, which may be colored according to the weights with
$b_i\geq a$. Visually,
this embedding has some similarities to 
UMAP and t-SNE in some cases, 
but can been seen to better respect the 
underlying geometry in others,
see Figure \ref{fig:square} below.

Using the MDS as above but in higher dimensions such as the full rank $r$ of the factorization also makes it possible
to generate \emph{alpha complexes} from the data,
which are simplicial complexes associated to a Euclidean point cloud based on a distance-restricted
version of the Delaunay triangulation
\cite{edelsbrunner2010computational}.
They have a filtered version which 
can be used to compute homological invariants of point clouds, but because they involve a minimal number of simplices compared to other constructions, they
also determine refined, visually appealing geometric models \cite{edelsbrunner1983molecule}.

To construct the alpha complex from the MDS embedding
of $\tilde{R}$, we first use
sequential packing to choose landmarks, in which
the points are scanned in order and each one is retained as a landmark only when it lies at least a prescribed separation distance $\epsilon>0$
from the landmarks already selected.  
A typical value of $\epsilon$ might be $.5$, noting
that the MDS results in a normalized scale which is in
units of standard deviation in the case of
Gaussian kernels.
 
We then generate an alpha complex using the dual active-set quadratic-programming method of \cite{carlsson2023alpha},
which allows for arbitrarily high dimension, as the input
depends only on respective dot products.
We have found it to be effective to only compute
up to the one-simplices, and fill out remaining ones
using the lazy construction, in which every simplex
is included provided all of its one-dimensional faces
exist in the alpha complex. The result may be used
to compute homology, to compute persistent homology
by using the lazy filtration associated to the vertex weights $b_i$, 
or to create an 
Isomap embedding based on a fixed super-level
set of its one-skeleton
\cite{Tenenbaum2000Isomap}.

\subsection{Coarse graining}

\label{sec:coarse_graining}

For very large Markov chains, we will use an additional
reduction by first replacing the transition matrix by
a smaller coarse-grained chain as follows. 
Suppose the original chain has
$N$ states and that we choose $n\ll N$ representative states. Let
$V$ be the $N\times n$ indicator matrix whose entry $v_{ij}$ is one
when the original state $i$ lies in the Voronoi cell of representative
state $j$, and zero otherwise. We form
\begin{equation}
\label{eq:mds}
    \bar{P}=[V^tPV], \ \ \bar{Q}=\exp(-t(\Id_n-\bar{P})),
    \ \ R=V\bar{R},\ \
    Q=[(\exp(-r_{ij}))]
\end{equation}
where as for vectors, 
the brackets mean that the rows are normalized to sum to one.
The second matrix $Q$ is then $N\times n$, so that we retain one row for every original state while working in the smaller probability simplex
$\Delta_n^+$.
Combining this with a low-rank factorization of $R$
makes it possible to work with large Markov-chain inputs
while storing only the logarithmic low-rank factors.

\section{Experiments}

\label{sec:experiments}

We demonstrate the method on two synthetic point clouds, a
one-dimensional family of images, and image data with nontrivial rotational
symmetry.  Each example follows the same pipeline: construct a local Markov
chain, pass to its positive continuous-time transition matrix using the matrix exponential, and sample 
from the transport shape. We then visualize or triangulate the results using multidimensional scaling applied to the logarithmic coordinates. 
For point clouds containing a few thousand points, computing
the matrix exponential and the sampled shape took a few seconds in our
implementation.  
The code for these tests can be found at the first author's website 
\url{math.ucdavis.edu/~ecarlsson/AlphaKL.zip}.

\subsection{Synthetic data maze}

\begin{figure}
\centering
\adjustbox{max width=.32\textwidth,max height=4cm}{
    \includegraphics[
    width=\linewidth,
    trim=0cm .5cm 0cm 0cm,
    clip]{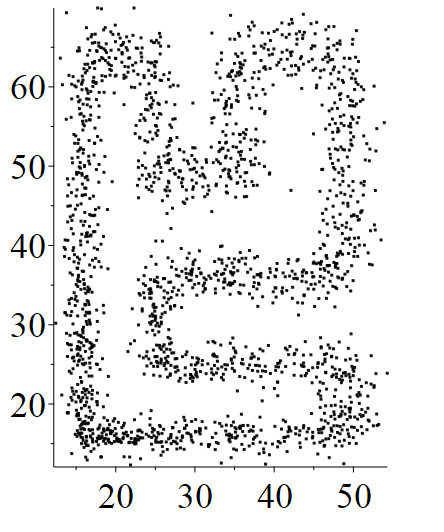}
}
\adjustbox{max width=.32\textwidth,max height=4cm}{
    \includegraphics[
    width=\linewidth,
    trim=0cm -1cm 0cm 0cm,
    clip]{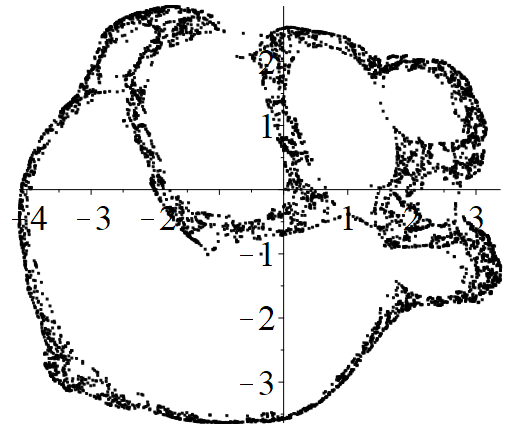}
}
\adjustbox{max width=.32\textwidth,max height=4cm}{
    \includegraphics[
    width=\linewidth,
    trim=0cm .5cm 0cm 0cm,
    clip]{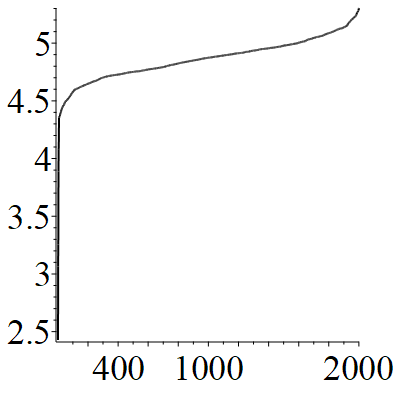}
}
\centering
\adjustbox{max width=.32\textwidth,max height=4cm}{
    \includegraphics[
    width=\linewidth,
    trim=0cm 2cm 0cm 0cm,
    clip]{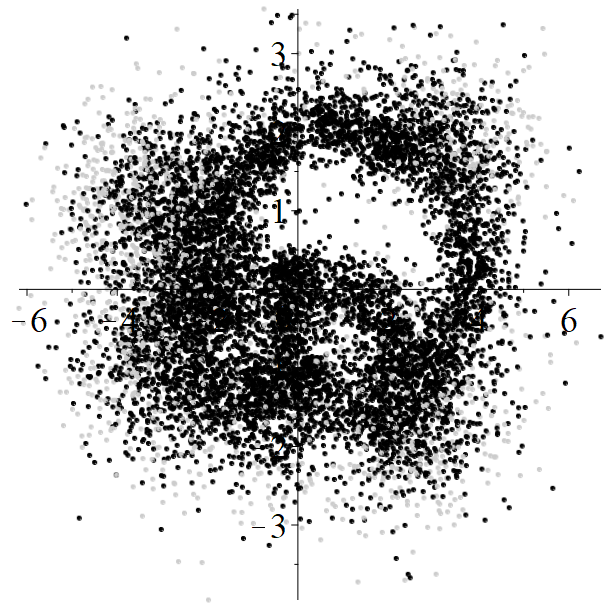}
}
\adjustbox{max width=.32\textwidth,max height=4cm}{
    \includegraphics[
    width=\linewidth,
    trim=0cm -1cm 0cm 0cm,
    clip]{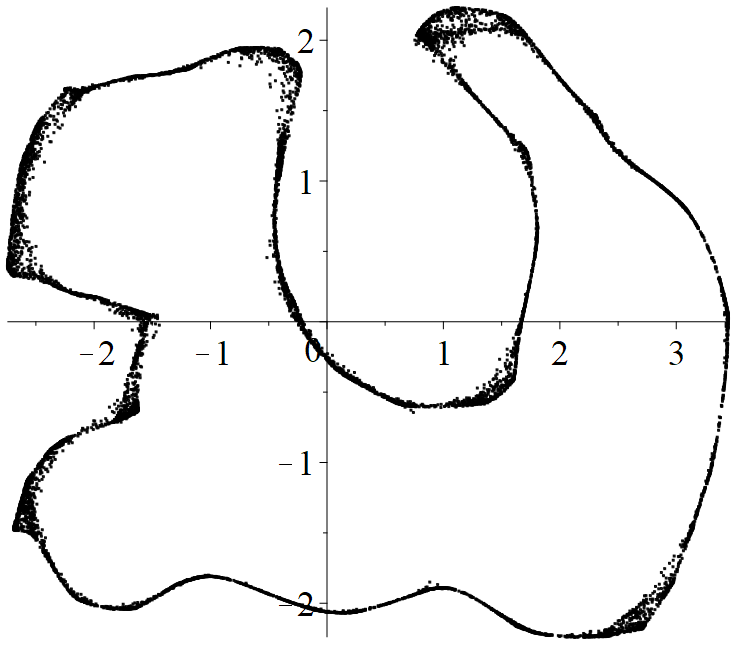}
}
\adjustbox{max width=.32\textwidth,max height=4cm}{
    \reflectbox{\includegraphics[
    width=\linewidth
    ]{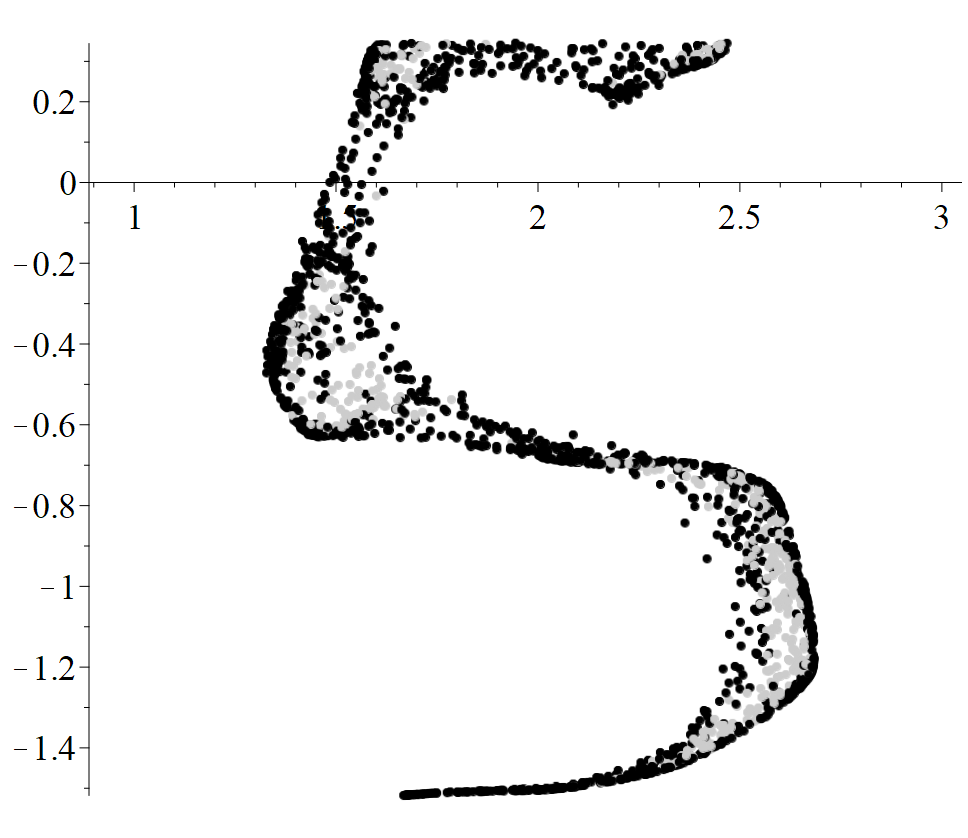}}
}
\caption{The data-maze experiment, from the input point cloud through
transported sampling and the final packed alpha complex.}
\label{fig:maze}
\end{figure}

Our first synthetic example is a noisy ``data maze'' consisting of $2000$ 
points in $\mathbb{R}^2$ arranged around a looping path, with nonuniform local 
scales. We formed the stochastic neighbors 
transition matrix with entropy $H=2$, 
computed $Q=P(t)$ at
$t=30$, and then reduced the rank to $r=100$ 
in the logarithmic factorization. 
Figure \ref{fig:maze} shows the successive stages.  
The upper row contains the original point cloud, 
the MDS embedding of the rows of $Q$ in logarithmic
coordinates, and a plot of the sorted values of $\psi(q_i)$
for each row $q_i$ of $Q$.

The lower left image shows
samples from the 
Gaussian distribution described in Method B of Section \ref{sec:sampling} in the same MDS coordinates, while the lower middle shows the considerably condensed
images of these points under the transport map $T$. We may select only points from either
$\mathcal{Q}(a)$ or $T(\mathcal{P}(a))$
 by filtering these points according to 
 $\zeta(T(\tilde{p}_i))$ and $\psi(\tilde{p}_i)$
 respectively.

The lower right frame illustrates that the super-level
set $\mathcal{P}(a)$ is indeed expected to be topologically the same as a circle for the value $a=4.0$. To generate the picture, we selected all
those sampled points $p=\tilde{p}_i$ for which 
$\psi(p)\geq a$, which are shown in gray.
For the remaining points shown in black, 
we moved along the
flow $p_s=[p^{1-s}q^{s}]$ for $q=T(p)$ as in 
Lemma \ref{lem:flow}, until obtaining the unique point at which $\psi(p_s)=4.0$. We discarded the answer
if there is no such point, in other words if
$\psi(q)<a$. We see that these points appear as the boundary of a tube surrounding the core shape,
which appears to be topologically equivalent to a circle.

\subsection{Uniform samples from the unit square}

The output of the previous example resembles UMAP in the sense that both methods can collapse the
thick, noisy maze to a point cloud arranged around its central circle. The next example illustrates a 
difference in terms of preserving the underlying geometry.

\begin{figure}[t]
\centering
    \begin{subfigure}[b]{.32\linewidth}
\includegraphics[scale=.21]{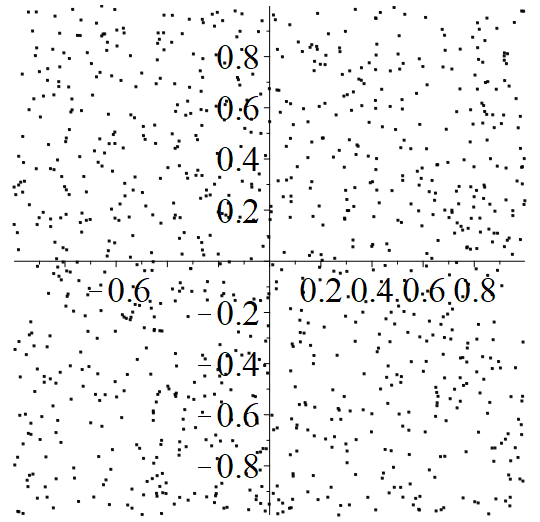}
\end{subfigure}
    \begin{subfigure}[b]{.32\linewidth}
        \includegraphics[scale=.305]{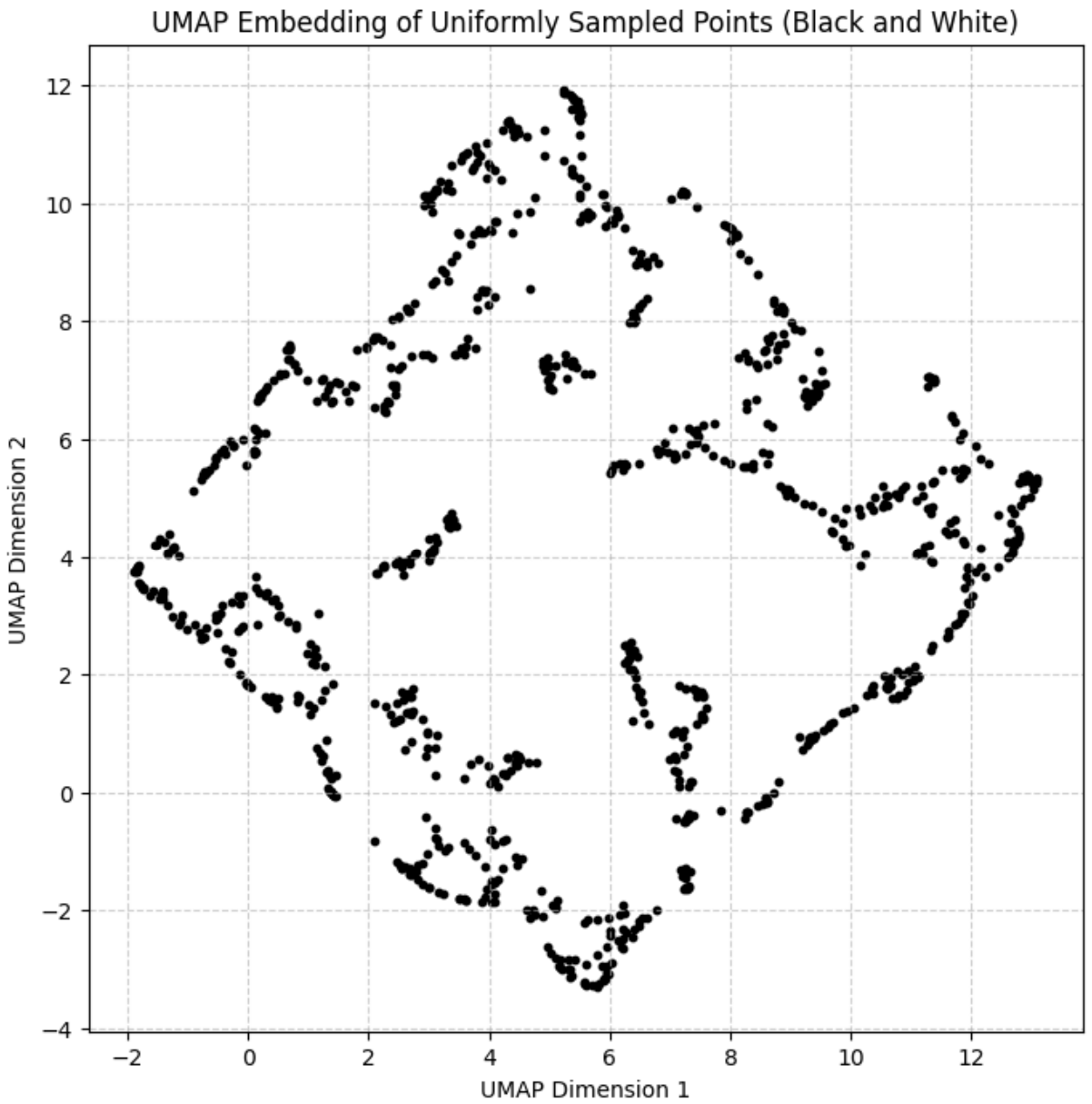}
    \end{subfigure}    
    \begin{subfigure}[b]{.32\linewidth}
\includegraphics[scale=.12]{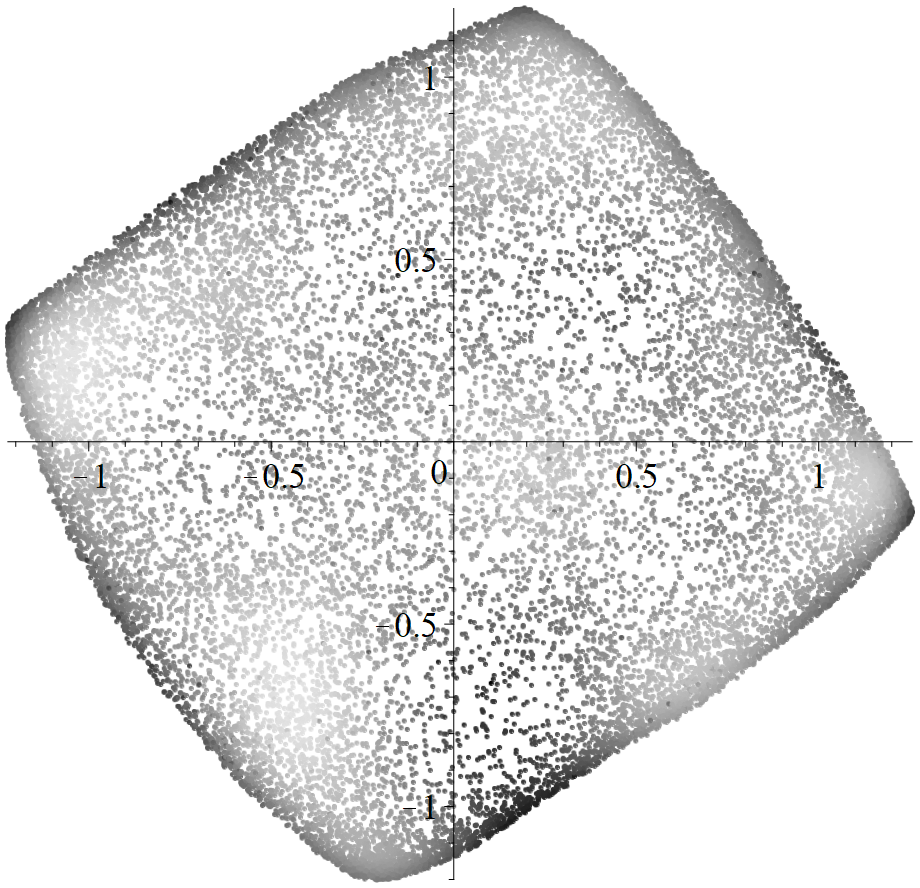}  
\end{subfigure} 

    \caption{1000 uniform samples from the unit square (left), its default UMAP embedding (middle),
    and 20000 samples from the transported shape (right).}
    \label{fig:square}
\end{figure}

For this, we sampled $1000$ points uniformly from
$[-1,1]^2$, shown in the left panel of Figure \ref{fig:square}.  The
middle panel applies UMAP with its standard choices
\texttt{n\_neighbors=15} and \texttt{min\_dist=.1},
showing that the embedding pinches
together regions that were separated in the square.
We then considered the transport shape
for the value of $H=2.0$ in the stochastic neighbors
walk, and $t=10$.
The right panel shows the result of sampling
20000 points from the shape using Method B again,
embedded using the first two MDS coordinates.
Adding a third coordinate shows some subtle warping,
but the shape is visibly two-dimensional.

We see that the samples in the transport shape 
are more uniform,
showing less pinching than appears in the second frame. Another feature is that the gaps may be filled
in by sampling more points from the shape than the
size of the original point cloud, in this case
20x more. Even if the goal is to obtain
fewer landmark points for the purposes of building a mesh, it is still an advantage to subsample from the larger collection of samples as in the right frame.

\subsection{Warmup: one-dimensional spaces of images}

We next considered an image dataset 
in which one object is observed from many viewing
angles.  For generic pictures, a full rotation should trace a topological circle, which is a standard test
for topological inference by persistent homology. It is also a setting which requires non-Euclidean kernel functions
which capture the changing metrics.

\begin{figure}[t]
\centering
\begin{subfigure}[b]{.32\linewidth}
\includegraphics[scale=.86]{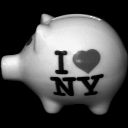}
\end{subfigure}
\begin{subfigure}[b]{.32\linewidth}
{\includegraphics[scale=.115,trim=0.0cm 2.0cm 0.0cm 4.0cm,clip]{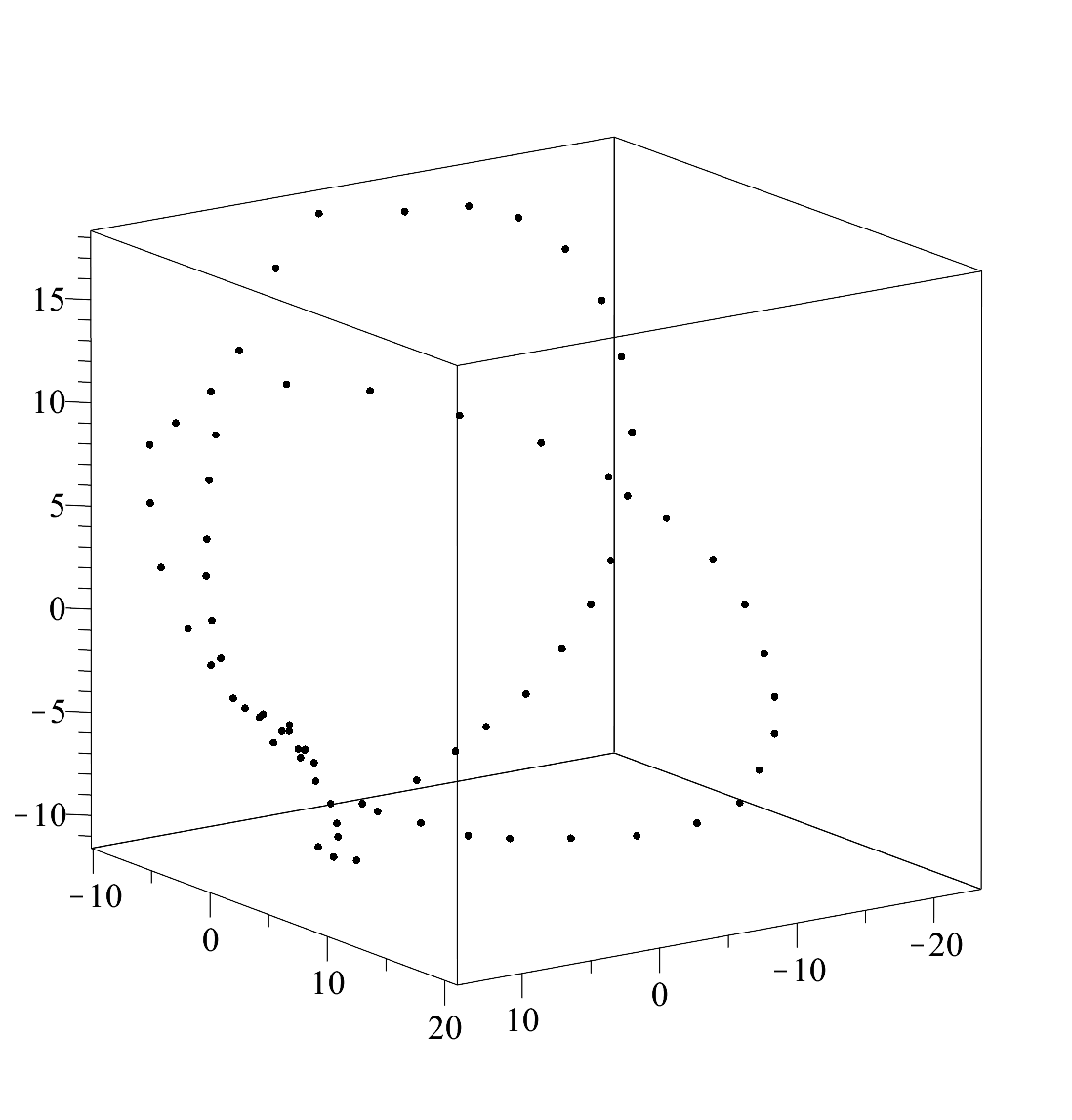}}
\end{subfigure}
\begin{subfigure}[b]{.32\linewidth}
{\includegraphics[scale=.10,trim=5.0cm 8.0cm 0.0cm 0.0cm,clip]{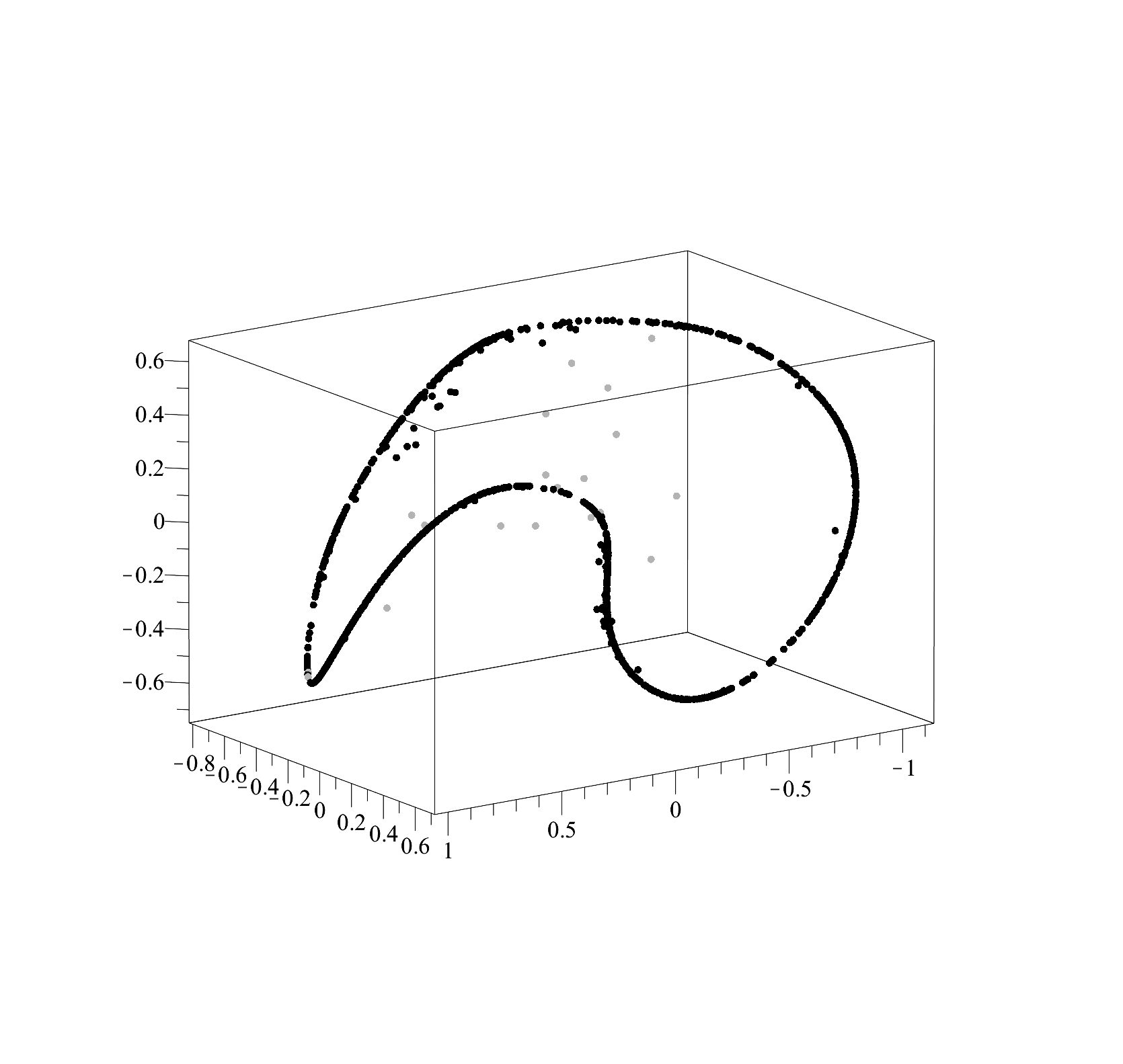}}
\end{subfigure}
\centering
\begin{subfigure}[b]{.32\linewidth}
\includegraphics[scale=.86]{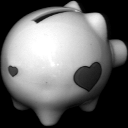}
\end{subfigure}
\begin{subfigure}[b]{.32\linewidth}
{\includegraphics[scale=.85]{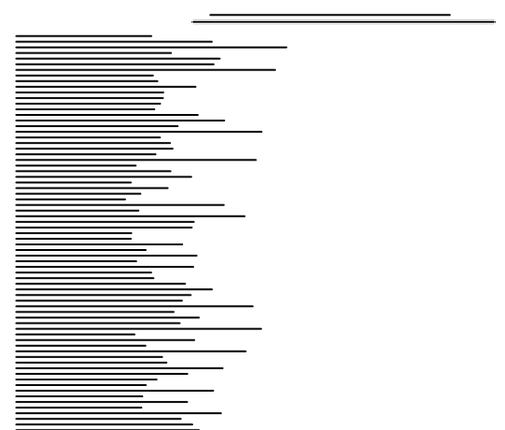}}
\end{subfigure}
\begin{subfigure}[b]{.32\linewidth}
{\includegraphics[scale=.28,trim=0.0cm 0.0cm 0.0cm 0.0cm,clip]{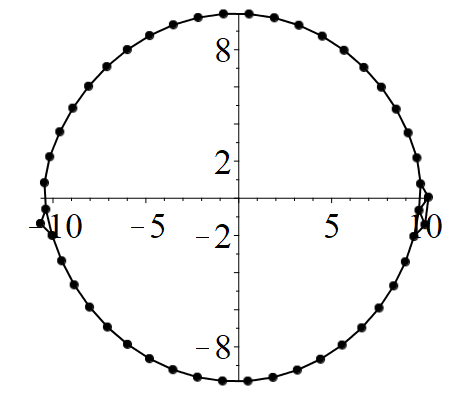}}
\end{subfigure}

    \caption{COIL images, their PCA and persistence analysis, and the sampled
    transport shape before and after Isomap.}
    \label{fig:coil}
\end{figure}

We used the COIL data set \cite{nene1996coil100}, in which an object is
photographed at $72$ angles over a complete $360^\circ$ rotation,
shown in the two
left panels of Figure \ref{fig:coil}. In the experiment in Oudot's INF556 notes \cite{oudotINF556TD5}, shown in the middle panels, the images were first
projected to three dimensions using PCA, and persistent homology
was then computed.
Two viewing angles turn out to be visually similar 
and collapse together under the projection. The resulting self-intersection creates two
one-dimensional persistent homology classes, which
are represented by the two longer bars at the top of the upper-middle frame.

To apply the transport method, we 
used stochastic neighbors with entropy $H=1.0$ and
the same thresholding procedure as in the maze example. 
The upper-right
panel shows the sampled transport shape, with white circles indicating samples whose filtration value was below a selected level. 
We then used
sequential packing and an alpha complex in the higher-dimensional MDS coordinates, followed by Isomap, to produce the single circle in the lower-right panel. Because the input contains only $72$ rows, the Gaussian/MDS resampling
from Method B was used to fill in the circle.  

\subsection{Tetrahedra and rotational symmetry}
\label{sec:tetrahedra}

\begin{figure}[t]
    \centering
\begin{subfigure}[b]{.32\linewidth}
\includegraphics[scale=.65]{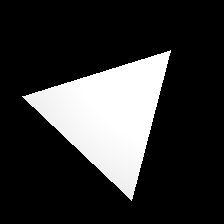}
\end{subfigure}
   \begin{subfigure}[b]{.32\linewidth}
\includegraphics[scale=.65]{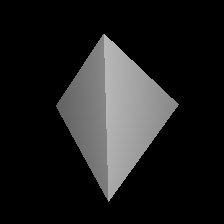}  
\end{subfigure}
    \begin{subfigure}[b]{.32\linewidth}
\includegraphics[scale=.65]{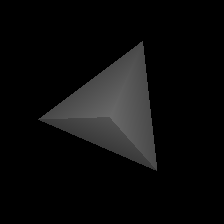}
\end{subfigure}           
    \centering

\begin{subfigure}[b]{.32\linewidth}
{\includegraphics[scale=.09,trim=0cm 4.5cm .8cm -1.1cm,clip]{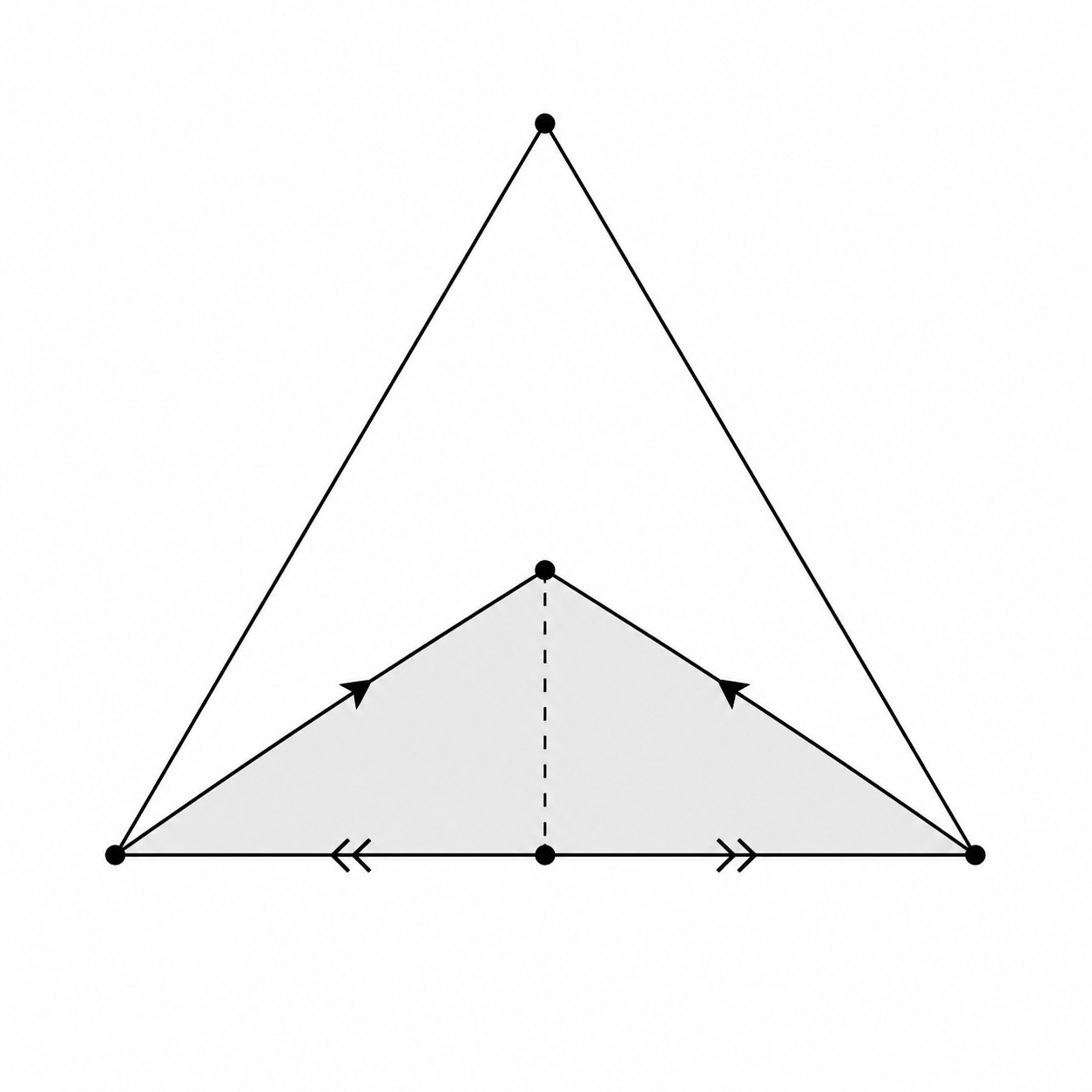}  }
\end{subfigure}
 \begin{subfigure}[b]{.32\linewidth}
    {\includegraphics[scale=.21,trim=7.5cm 6cm 7.7cm 7cm,clip]{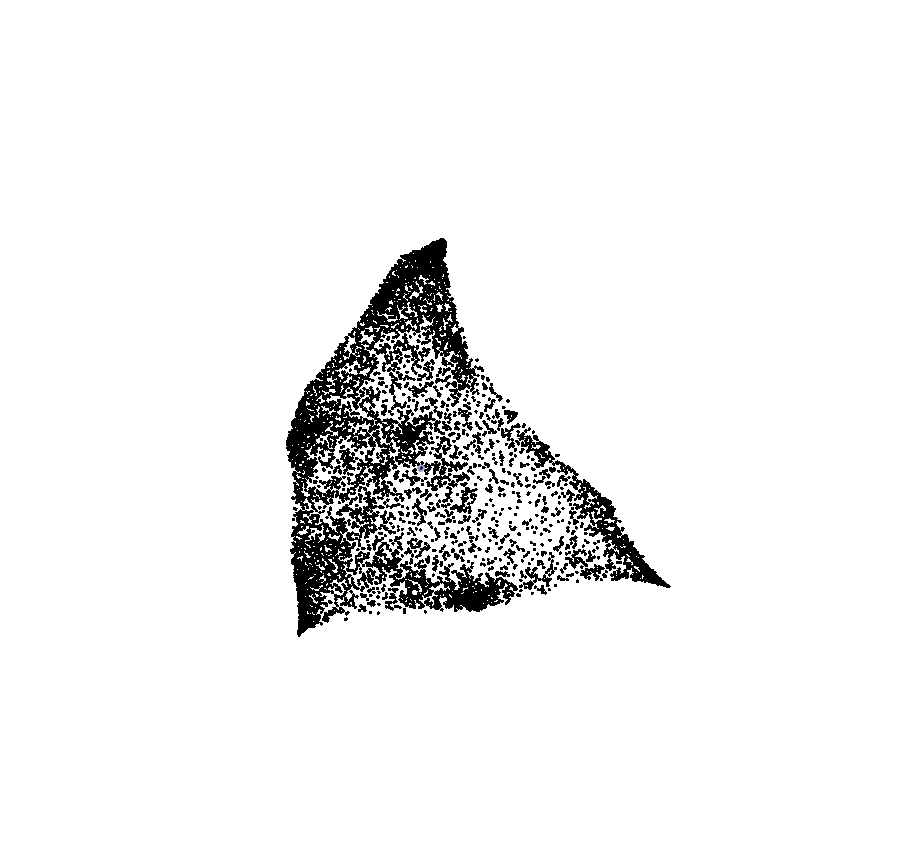}}
    \end{subfigure}   
    \begin{subfigure}[b]{.32\linewidth}
        {\includegraphics[scale=.241,trim=9.5cm 7cm 7.8cm 8cm,clip]{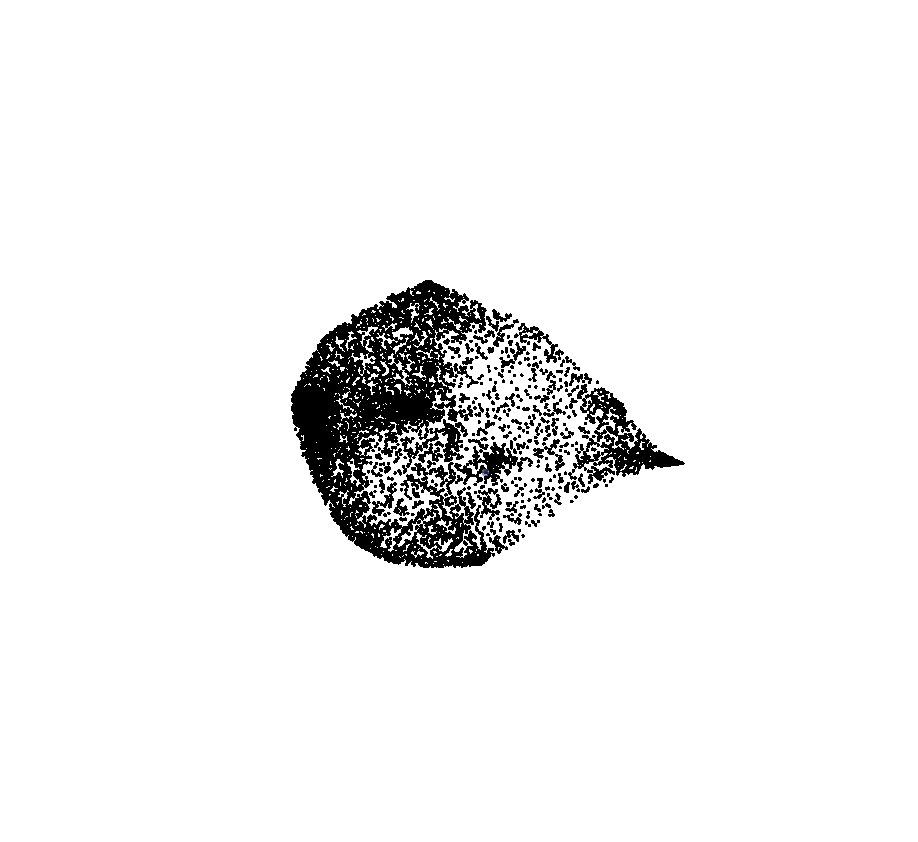}}
    \end{subfigure}    

    \caption{Three images from the tetrahedral SYMSOL dataset, a description of the fundamental domain, 
    and two views of the sampled quotient shape.}
    \label{fig:tetrahedron_distances}
\end{figure}

We next applied the construction to the SYMSOL symmetry data set \cite{wiersma2021symsol}, 
specifically the tetrahedron class, which consists of views of a tetrahedron from many orientations. 
Like the previous example, this is another
instance where standard $L^2$ distance falls off rapidly, making it a 
good use case for the moving kernel generated by the matrix exponential.
We applied the transport shape
to recover the geometry of these spaces
using both visual embeddings and a homology
calculation, using the methods described in Section \ref{sec:alphacomplex}.


We first generated a visual model of the transport shape, using a distance that 
is invariant under rotations of the image plane. To define this distance, we expanded each image in Gaussian-weighted Hermite modes through degree $k=20$, so that the resulting complex coordinates are compatible with the $S^1$ action obtained by rotating the image about the center. We then defined the distance between two images by minimizing their $L^2$ distance over this action, evaluating the minimum over 500 values of $\theta\in[0,2\pi]$. 
If there were no symmetries,
a point in the resulting shape would
be expected to capture the space of camera positions, which is a sphere, rather than the full $SO(3)$ of orientations.

We then applied stochastic neighbors to 2000
images using the rotation-invariant (squared) distance,
with the values of $H=2.0$, $t=30$.
We sampled 10000 points using method B,
and then projected the result to low dimensions using the MDS.
The tetrahedral rotational symmetries introduced three singular points, corresponding to views centered on a face, an edge, and a vertex, as in the top row of the Figure \ref{fig:tetrahedron_distances}. Geometrically, one can think of the shape as a fundamental spherical triangle with angles $30^\circ,30^\circ,120^\circ$, with appropriate edges identified. Its underlying topology is therefore that of a sphere, but with three cuspidal points, 
corresponding to the three types of special views. 
These features are clearly visible in the second
row.

We then removed the rotational invariance and instead used the transport construction as a source of samples for a topological calculation. 
For this we used all 45000 images and took the original 
points as the inputs to the transport map (method A), using also the coarse graining procedure from Section \ref{sec:coarse_graining} to reduce the number of states in the Markov chain to 10000, and took $H=3$ and $t=30$.
From the resulting data we constructed an alpha complex and computed its homology. The first three Betti numbers over 
$\mathbb Z/3$ were $(1,1,1)$ through dimension two, while over $\mathbb Z/2$ they were $(1,0,0)$, which is consistent with the expected topology of $SO(3)/A_4$, where $A_4$ is the rotational symmetry group of the tetrahedron. 
Using Javaplex in MATLAB, one can reconstruct representatives of these one-cycles
over $\mathbb{Z}/3$ from the vertices of the alpha complex and animate the corresponding sequence of tetrahedral views. Along such a cycle, one corner maps to itself, while the other three rotate in a cycle.

\bibliographystyle{plain}

\bibliography{refs}

\end{document}